\documentclass{amsart}

\title[Frucht's theorem and other set-theoretic principles below $\AC$ and $\AF$]%
      {Frucht's theorem and other set-theoretic principles below the axiom of choice and the axiom of foundation}

\author[Chen]{Junhong Chen}
 \address[Junhong Chen]
         {School of Mathematical Science, Fudan University, 220 Handan Road, Shanghai, 200433 China}
\email{21300180086@m.fudan.edu.cn}

\author[Ju]{Daheng Ju}
 \address[Daheng Ju]
         {Department of Philosophy and Religious Studies, Peking University, 5 Yiheyuan Road, Beijing, 100871 China}
 \email{judaheng@pku.edu.cn}

\usepackage{latexsym,amsfonts,amsmath,amssymb,mathrsfs,tikz-cd,adjustbox}
\usetikzlibrary{fit,decorations.pathmorphing,backgrounds}
\usepackage{changepage,calc}
\usepackage{csquotes}
\usepackage{braket}
\usepackage{bbm}
\usepackage[hidelinks]{hyperref}
\usepackage{relsize}
\usepackage[cmyk,svgnames,dvipsnames]{xcolor}
\usepackage{tikz}
\usetikzlibrary {3d}
\usepackage{comment}
\usetikzlibrary{arrows,arrows.meta,petri,topaths,positioning,shapes,shapes.misc,patterns,calc,decorations.pathreplacing,hobby}
\pgfdeclarelayer{foreground}
\pgfdeclarelayer{background}
\pgfdeclarelayer{boardarrows}
\pgfdeclarelayer{boardgrid}
\pgfdeclarelayer{boardshades}
\pgfsetlayers{boardshades,boardgrid,boardarrows,background,main,foreground}
\usepackage{wrapfig} 
\usepackage{float}
\usepackage[utf8]{inputenc}
\RequirePackage{doi}

\usepackage{enumitem}

\usepackage[backend=bibtex,style=alphabetic,maxbibnames=15,maxcitenames=6,dateabbrev=false]{biblatex}
\renewcommand{\UrlFont}{} 
\renewbibmacro{in:}{\ifentrytype{article}{}{\printtext{\bibstring{in}\intitlepunct}}} 
\DeclareFieldFormat{url}{\UrlFont\url{#1}} 
\DeclareFieldFormat{urldate}{
  (version \thefield{urlday}\addspace%
  \mkbibmonth{\thefield{urlmonth}}\addspace%
  \thefield{urlyear}\isdot)}
\DeclareFieldFormat{eprint:arxiv}{
  \ifhyperref
    {\href{http://arxiv.org/abs/#1}{%
        arXiv\addcolon\nolinkurl{#1}}\iffieldundef{eprintclass}{}{\UrlFont{\mkbibbrackets{\thefield{eprintclass}}}}}
    {arXiv\addcolon\nolinkurl{#1}\iffieldundef{eprintclass}{}{\UrlFont{\mkbibbrackets{\thefield{eprintclass}}}}}}

\renewcommand\emptyset{\varnothing}
\newcommand{\FT}{\mathsf{FT}}
\newcommand{\ZF}{\mathsf{ZF}}

\newcommand{\ZFF}{\mathsf{ZF-AF}}
\newcommand{\ZFFC}{\mathsf{ZF}_{Coll}\mathsf{-AF}}
\newcommand{\ZFCF}{\mathsf{ZFC-AF}}
\newcommand{\AWF}{\mathsf{AWF}}
\newcommand{\AC}{\mathsf{AC}}
\newcommand{\AF}{\mathsf{AF}}
\newcommand{\ARs}{\mathsf{ARs}}
\newcommand{\BAFA}{\mathsf{BAFA}}
\newcommand{\FAFA}{\mathsf{FAFA}}
\newcommand{\SAFA}{\mathsf{SAFA}}
\newcommand{\AFA}{\mathsf{AFA}}
\newcommand{\AIE}{\mathsf{AIE}}
\newcommand{\AR}{\mathsf{AR}}
\newcommand{\ZFA}{\mathsf{ZFA}}
\newcommand{\ZFAC}{\mathsf{ZFA}_{Coll}}
\newcommand{\ZFCA}{\mathsf{ZFCA}}

\newcommand{\RR}{\mathsf{RR}}
\newcommand{\WRR}{\mathsf{WRR}}
\newcommand{\WFT}{\mathsf{WFT}}
\newcommand{\GBF}{\mathsf{GB-AF}}
\newcommand{\GFT}{\mathsf{GFT}}
\newcommand{\GWFT}{\mathsf{GWFT}}

\newcommand{\GC}{\mathsf{GC}}
\newcommand{\DC}{\mathsf{DC}}
\newcommand{\GWO}{\mathsf{GWO}}
\newcommand{\CWFAFA}{\mathsf{CWFAFA}}
\newcommand{\WFAFA}{\mathsf{WFAFA}}
\newcommand{\WF}{\mathord{{\rm WF}}}
\newcommand{\WFP}{\operatorname{wf}}
\newcommand{\IFP}{\operatorname{if}}

\newtheorem{theorem}{Theorem}

\begin{document}

\begin{abstract}
  We take the first step toward the study of set-theoretic principles below the axiom of choice $\AC$ and the axiom of foundation $\AF$ by studying Frucht's theorem, an ordinary mathematical theorem which is provable with either $\AC$ or $\AF$ but not provable without both, and its variants.
  Specifically, we propose a number of such principles, study the relations between these principles and the standard axioms, and prove provability and unprovability results using (infinite) graph-theoretic constructions and permutation models, which draw a preliminary map of this new area of set theory.
\end{abstract}

\maketitle

\tableofcontents

\newcommand\JD[1]{{\color{blue}(JD: #1)}}
\newcommand\CJ[1]{{\color{red}(CJ: #1)}}

\section{Introduction}

Frucht’s theorem, which will be denoted by $\FT$ throughout this paper, says that every group is isomorphic to the automorphism group of
a simple graph. This was proved by Frucht \cite{Frucht1939} in 1939 for finite groups, and generalized independently by deGroot \cite{De_Groot1959-dv} and Sabidussi \cite{Sabidussi1960-dq} to infinite groups in 1959-1960. It is provable in either $\ZF$ or $\ZFCF$(the axiom of foundation). Surprisingly, Pinsky proves in \cite{Pinsky2023-rx} that $\FT$ can't be proved in $\ZFF$ alone(assuming it is consistent), and this is the first time an ordinary mathematical theorem that has such a property is discovered.

There are many theorems of the same style as this original Frucht's theorem, all of which state that every group is isomorphic to the automorphism group of some specified mathematical structure. For example, we may claim that every group is isomorphic to the automorphism group of a poset, which was first stated by Birkhoff in \cite{Birkhoff1946}; another claim, that every group is isomorphic to the autohomeomorphism group of a topological space, was first stated by deGroot in \cite{De_Groot1959-dv}; we can even claim that every group is isomorphic to the automorphism group of a field, and this assertion can be found in \cite{FriedKollar1981}. All of these variants of Frucht's theorem are equivalent with the original Frucht's theorem under $\ZFF$\cite{Kaplan2012-zh}, so we won't spend much time on them.

Throughout this paper, we work in $\ZFF$. Since Frucht's theorem is a graph-theoretic theorem, we shall fix some graph-theoretic conventions first. A graph is a structure $(V,E)$ with $E\subseteq V^2$, and a simple graph is a structure $(V,E)$ with $E\subseteq [V]^2$. A pointed (simple) graph is a (simple) graph with a specified vertex called the point, i.e., a structure $(V,E,p)$. A pointed (simple) graph is accessible if for every vertex there exists a path from the point to it. A structure is rigid if it doesn't have non-trivial automorphisms. We will abbreviate “accessible pointed graph” as APG, and abbreviate ``accessible pointed simple graph'' as APSG. We will abbreviate “rigid APG” as RAPG, and abbreviate ``rigid APSG'' as RAPSG(\cite{Pinsky2023-rx} calls it RCPG).

The proof that $\FT$ is provable from either $\AC$ or $\AF$ is as follows: first, both $\AC$ and $\AF$ imply that every set is in bijection with a set in $\WF$, the well-founded universe (this statement will be denoted by $\AWF$ in this paper); second, $\AWF$ implies that every set is in bijection with a collection of mutually non-isomorphic RAPSGs (this statement will be denoted by $\ARs$ in this paper); finally, $\ARs$ implies $\FT$ according to a classic graph-theoretic construction starting from the Cayley graph of a group.\cite{Pinsky2023-rx}


In this paper, we take the first step toward the study of $\FT$, $\ARs$, $\AWF$ and other set-theoretic principles below $\AC$ and $\AF$.
The structure of this paper is as follows.
In Section~2, we prove that $\FT$ is actually equivalent to $\ARs$, so $\FT$ itself can be viewed as a set-theoretic principle.
Section~3 is devoted to anti-foundation axioms: we show that $\FAFA_2$ implies $\FT$, separate $\FAFA_2$ from both $\AWF$ and $\AC$, and then exhibit that the implication from $\AWF$ and $\FAFA_2$ to $\FT$ is strict.
In Section~4, we collect equivalent forms of $\FT$, including $\AIE$, $\AIE'$, $\AR$, $\ARs$ and $\WRR$.
Section~5 introduces the weak Frucht theorem $\WFT$, where the requirement of rigidity is dropped from $\ARs$, and demonstrates by a permutation model argument that $\WFT$ is strictly weaker than $\FT$.
In Section~6, we move to the second-order set theory $\GBF$ and study globalizations of $\FT$; we show that the global Frucht theorem $\GFT$ coincides with its weak version $\GWFT$, and that a global well-ordering exists exactly when both the axiom of global choice and $\GFT$ hold.
Section~7 is a preliminary study of further set-theoretic principles $\CWFAFA_2$, $\WFAFA_2$ and $\WFAFA_2'$ that lie below $\AWF$ and $\FAFA_2$. The main results of this paper are demonstrated in the following figure.

\begin{figure}[H]
\[
\begin{adjustbox}{scale=0.6}
\begin{tikzcd}[
    execute at end picture={
        \definecolor{arrowred}{RGB}{214,92,92}
        \node[
            anchor=south east,
            align=left,
            font=\small\bfseries
        ] at (current bounding box.south east) {
            \textcolor{arrowred}{\tikz[baseline=-0.5ex]{\draw[->, arrowred, decorate, decoration={snake, amplitude=0.3mm, segment length=1.5mm}] (0,0) -- (0.6,0);} \ \ means unprovability} \\
            \textcolor{yellow}{\tikz[baseline=-0.5ex]{\draw[->, yellow, line width=0.4mm] (0,0) -- (0.6,0);} \ \ means provability but not known if it is strict} \\
            \textcolor{black}{\tikz[baseline=-0.5ex]{\draw[->, black, line width=0.4mm] (0,0) -- (0.6,0);} \ \ means strict provability}
        };
    }
]
    \AC & \AWF &&&&&& \\
    &&& \CWFAFA_2 & \WFAFA_2 & \WFAFA_2' & \FT & \WFT \\
    \AF & \FAFA_2 & \GFT &&&&& \\
    \AFA^\sim &&&&&&& \\
    \arrow[from=1-1, to=1-2]
    \arrow[color={rgb,255:red,214;green,92;blue,92}, squiggly, from=1-1, to=3-3]
    \arrow[from=1-2, to=2-4]
    \arrow[color=yellow, from=2-4, to=2-5]
    \arrow[color=yellow, from=2-5, to=2-6]
    \arrow[color=yellow, from=2-6, to=2-7]
    \arrow[from=2-7, to=2-8]
    \arrow[color={rgb,255:red,214;green,92;blue,92}, squiggly, from=3-1, to=1-1]
    \arrow[from=3-1, to=1-2]
    \arrow[from=3-1, to=3-2]
    \arrow[color={rgb,255:red,214;green,92;blue,92}, squiggly, from=3-2, to=1-2]
    \arrow[from=3-2, to=3-3]
    \arrow[from=3-3, to=2-4]
    \arrow[from=4-1, to=3-2]
\end{tikzcd}
\end{adjustbox}
\]
\caption{Main results of this paper}
\end{figure}

\section{Frucht's theorem as a set-theoretic principle}

It's hard to say that $\FT$ is a set-theoretic principle, but it's natural to say that $\ARs$ is a set-theoretic principle, at least it is claiming that every set has some special property.
In this section, we will show that $\FT$ is, in fact, equivalent to $\ARs$.

\begin{theorem}
    $\ZFF\vdash\FT\to\ARs$.
\end{theorem}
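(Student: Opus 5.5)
The plan is to start from a set $X$ and feed a well-chosen group $G$, built from $X$, into $\FT$. From the resulting graph $\Gamma$, whose automorphism group is isomorphic to $G$, I will read off one rigid accessible pointed simple graph for each $x\in X$, in such a way that distinct elements of $X$ give non-isomorphic graphs.

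The point is that $\FT$ gives only an abstract isomorphism $\varphi\colon G\to\operatorname{Aut}(\Gamma)$. This $\varphi$ is a concrete set, so no choice is needed to use it; what we cannot do is choose vertices of $\Gamma$. Every construction must therefore be canonical from $(\Gamma,\varphi,x)$.

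For $G$ I would take a group into which $X$ embeds with each $x$ recoverable from its image up to conjugacy and automorphism. A convenient candidate is the free group $F(X)$, or a free product $\ast_{x\in X}\mathbb{Z}$ with an extra marking making generators non-conjugate to one another. Then $\sigma_x:=\varphi(x)$ is a distinguished automorphism of $\Gamma$ for each $x$, and distinct $x$ give non-conjugate $\sigma_x$ inside $\operatorname{Aut}(\Gamma)$.

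Next, for each $x$, I form the graph $\Delta_x$ obtained from $\Gamma$ by adding, via a standard rigid gadget, directed labelled edges $v\to\sigma_x(v)$. I then pass to a simple graph using the usual edge-subdivision gadgets, which are available in $\ZFF$. The automorphisms of $\Delta_x$ are exactly the centralizer $C(\sigma_x)$ acting on $\Gamma$. To kill these automorphisms I take a canonical quotient or substructure: for instance the graph whose vertices are the $C(\sigma_x)$-orbits, or, better, the whole structure $(\Gamma,\operatorname{Aut}(\Gamma))$ coded as a single graph. In the latter, the vertex set $\Gamma\sqcup G$ carries the action relation and a new point $p$ is joined to the vertex coding $\sigma_x$. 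An automorphism of this coded structure must fix $G$ pointwise, because it commutes with the regular action and fixes the marked element, and an automorphism of $\Gamma$ commuting with all of $\operatorname{Aut}(\Gamma)$ and fixing the coding is trivial.

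Accessibility is arranged by joining $p$ to every vertex through an intermediate layer. Non-isomorphism between the graphs for $x\neq y$ comes from the marked elements $\sigma_x$ and $\sigma_y$ not being related by any automorphism of the coded structure.

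The main obstacle I expect is rigidity: showing that the coded structure $(\Gamma,G,\text{action},\sigma_x)$ has no nontrivial automorphism. An automorphism of that structure induces a permutation $\pi$ of $\Gamma$ together with an automorphism $\alpha$ of $G$ satisfying $\pi g\pi^{-1}=\varphi(\alpha(g))$. Nothing forces $\pi$ to be trivial; for instance, if $\pi$ lies in $\operatorname{Aut}(\Gamma)$, then $\alpha$ is the corresponding inner automorphism. I would first try to choose $G$ centerless and complete, and to encode $\varphi$ itself as part of the structure, so that $\alpha$ is forced to be the identity and $\pi$ to be central, hence trivial. If that fails, I would replace $G$ by a group for which the translation from an automorphism of the coded structure back to $X$ becomes trivial, and check that step carefully.
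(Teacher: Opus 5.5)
There is a genuine gap, at the step you yourself flag: rigidity. Your proposed remedy cannot close it. In the coded structure $(\Gamma,G,\text{action},\sigma_x)$, take the permutation $\sigma_x$ on $\Gamma$ together with conjugation by $x$ on $G$. This pair preserves the edges of $\Gamma$. It respects the action, because $\sigma_x\varphi(g)\sigma_x^{-1}=\varphi(xgx^{-1})$. It fixes the marked element $x$. So it is an automorphism, and it is nontrivial because $\sigma_x\neq\mathrm{id}$.

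The same pair is an automorphism of $\Delta_x$, and of anything else you build from $\Gamma$, the action, and the single mark $\sigma_x$. So your claim that an automorphism of the coded structure must fix $G$ pointwise is false. Making $G$ complete and centerless, or also coding $\varphi$, does not help, since the offending automorphisms are inner. Passing to the quotient by $C(\sigma_x)$-orbits does kill $\sigma_x$, but then you lose all control over the automorphisms of the quotient and over non-isomorphism between quotients. With only one marked element, the centralizer always contains $\langle\sigma_x\rangle$.

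The missing idea is to mark a second element, chosen uniformly in $x$ so that no choice is needed. Take $G$ to be the free group on $X\cup\{z\}$ with a fresh generator $z\notin X$, and let $\sigma_z=\varphi(z)$. In $\Delta_x$, add labelled edges $v\to\sigma_z(v)$ alongside $v\to\sigma_x(v)$, using a gadget distinguishable from the first one. By your own computation, automorphisms of this graph are exactly the $\varphi(s)$ with $s$ commuting with both $x$ and $z$. In a free group $C(x)\cap C(z)=\langle x\rangle\cap\langle z\rangle=\{1\}$, so the graph is rigid. An isomorphism between the graphs for $x\neq y$ would restrict on $\Gamma$ to some $\varphi(s)$ with $sxs^{-1}=y$. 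That is impossible, since distinct free generators are not conjugate (compare abelianizations).

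This is exactly the paper's proof. The paper uses an explicit degree-based gadget $p^1_a,\dots,p^5_a$ attached to each vertex $a$ of $\Gamma$, with each $p^1_a$ joined to a new point, which also gives accessibility. Your $\Delta_x$ was one extra edge system away from a complete argument.
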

\begin{proof}

    For any set $X$, consider the free group $G$ generated by $X\cup\{z\}$, where $z$ is an element not in $X$. Frucht's theorem gives us a simple graph $C=(V,E)$ and an isomorphism $f:G\to\operatorname{Aut}(C)$. Every $x\in X$ can be viewed as an automorphism $f(x):C\to C$, and we only need to encode this automorphism as an RAPSG.

    The construction is as follows. We add to $C$ $5 \times C$ many vertices together with one more vertex $p_{C_x}$ as the point, and add some new edges between these vertices as shown in the following figure (where $a,b$ are vertices of $C$).

    Let us prove that $C_x$ is rigid. Suppose $g$ is an automorphism of $C_x$. First, $g$ must fix $p_{C_x}$. Every vertex $p^1_a$ is adjacent to $p_{C_x}$, so $g$ induces a permutation of the set $\{p^1_a \mid a\in C\}$. If $p^1_a$ is mapped to $p^1_b$, then $p^2_a$, $p^4_a$ and $p^5_a$ are forced to be mapped to $p^2_b$, $p^4_b$ and $p^5_b$, since they have distinct degrees ($2$ for $p^2$, $4$ for $p^4$ and $3$ for $p^5$). Consequently, $g$ must send every $p^i_a$ to $p^i_b$ and send $a$ to $b$. Thus, the restriction of $g$ to $C$ is an automorphism of $C$, and clearly $g$ is completely determined by this restriction.

    Let $f(s)\in\operatorname{Aut}(C)$ be the automorphism of $C$ induced by $g$. Observe that the edge between $p^5_a$ and $f(x)(a)$ and the edge between $p^3_a$ and $f(z)(a)$ must be mapped by $g$ to the edge between $p^5_{f(s)(a)}$ and $f(x)(f(s)(a))$ and the edge between $p^3_{f(s)(a)}$ and $f(z)(f(s)(a))$, respectively. This forces $f(x)(f(s)(a)) = f(s)(f(x)(a))$ and $f(z)(f(s)(a)) = f(s)(f(z)(a))$ to hold for every $a\in C$. Therefore, $f(x)f(s)=f(s)f(x)$ and $f(z)f(s)=f(s)f(z)$, and since $f$ is an isomorphism, $xs=sx$ and $zs=sz$ in the free group $G$. In a free group, $xs=sx$ implies that $s$ is a power of $x$, while $zs=sz$ implies that $s$ is a power of $z$. Because $x$ and $z$ are distinct free generators, the only common power is the identity; thus $s$ is the empty word. Consequently, $f(s)$ is the identity map on $C$, and $g$ is the identity map on $C_x$.

    Now the proof for $C_x$ being non-isomorphic to $C_y$ when $x\neq y$ is of the same manner as above: any such isomorphism must be induced by some $f(s)$ on $C$, and the sequence $s$ must satisfy the equation $sy=xs$, which is impossible.

\end{proof}

\begin{figure}[htbp]
\centering
\adjustbox{scale=0.6,center}{\begin{tikzcd}
	&&&&& {p_{C_x}} &&&&& \\
	&& {p_a^1} &&& \dots &&& {p_b^1} \\
	& {p_a^2} &  &&&&&&  & {p_b^2} \\
	& {p_a^3} & {p_a^4} & {p_a^5} &&&& {p_b^5} & {p_b^4} & {p_b^3} \\
	{f(z)(a)} && a && {f(x)(a)} && {f(x)(b)} && b && {f(z)(b)}
	\arrow[no head, from=1-6, to=2-3]
	\arrow[dotted, no head, from=1-6, to=2-6]
	\arrow[no head, from=1-6, to=2-9]
	\arrow[no head, from=2-3, to=3-2]
	\arrow[no head, from=2-3, to=4-3]
	\arrow[no head, from=2-3, to=4-4]
	\arrow[no head, from=2-9, to=4-9]
	\arrow[no head, from=2-9, to=3-10]
	\arrow[no head, from=2-9, to=4-8]
	\arrow[no head, from=3-2, to=4-2]
	\arrow[no head, from=3-10, to=4-10]
	\arrow[no head, from=4-2, to=5-1]
	\arrow[no head, from=4-3, to=5-3]
    \arrow[no head, from=4-3, to=4-2]
	\arrow[no head, from=4-3, to=4-4]
	\arrow[no head, from=4-4, to=5-5]
	\arrow[no head, from=4-8, to=5-7]
	\arrow[no head, from=4-9, to=5-9]
    \arrow[no head, from=4-9, to=4-8]
	\arrow[no head, from=4-9, to=4-10]
	\arrow[no head, from=4-10, to=5-11]
\end{tikzcd}}
\caption{Construction for $\FT\to\ARs$}
\end{figure}

\section{Anti-foundation axioms and Frucht's theorem}

It is known that $\AF$ can prove $\FT$. In non-well-founded set theory, people replace $\AF$ with one of the anti-foundation axioms, which implies the existence of non-well-founded sets. Can they prove $\FT$ too? In this section, we will answer this question. Specifically, we will show that a weakening of $\AF$ called $\FAFA_2$ implies $\FT$, while among the four well-known anti-foundation axioms, $\BAFA$, $\FAFA$, $\SAFA$, and $\AFA$, all the latter three are of the type $\mathsf{AFA}^\sim$ (where $\sim$ is one of the regular
bisimulations), and then imply $\FAFA_2$ just as $\AF$ does\cite{Aczel1988-vp}.

$\FAFA_2$ is the following assertion: the canonical picture of a set must be extensional and isomorphism-extensional. Here, the canonical picture of a set $x$ is the APG $(\operatorname{trcl}(\{x\}),\ni,x)$. Given an APG $G=(V_{G},E_{G},p_{G})$ and a vertice $a\in V$, the set of children of $a$ is $C_G(a)=\{b\in V_{G}\mid a\to b\}$ and the descendant subgraph of $a$ is $G[a]=(V_{G[a]},E_{G}\cap V_{G[a]}^2,a)$, where $V_{G[a]}=\{b\in V_{G}\mid $there is a path from $a$ to $b\}$. An APG is extensional if $C_G(a)=C_G(b)\to a=b$, and isomorphic-extensional if $G[a]\cong G[b]\to a=b$. Clearly, $\AF\to\FAFA_2$ follows from Mostowski's collapsing lemma.

$\FAFA_2\to\FT$ is shown by the following weakening chain from $\FAFA_2$ to $\ARs$.
\begin{enumerate}
    \item $\AIE$, the assertion that every set is in bijection with a collection of mutually non-isomorphic extensional, isomorphism-extensional APGs.

    It is weaker than $\FAFA_2$: just replace every element of the set with its canonical picture, and the fact that they are mutually non-isomorphic followed from an equivalent form of $\FAFA_2$ which asserts that two sets having isomorphic canonical pictures are identical.
    \item $\AIE'$, the assertion that every set is in bijection with a collection of mutually non-isomorphic isomorphism-extensional APGs.
    
    It is obviously weaker than $\AIE$.
    \item $\AR$, the assertion that every set is in bijection with a collection of mutually non-isomorphic RAPGs.

    It is weaker than $\AIE'$ because isomorphism-extensional APGs are rigid due to a standard argument considering the descendant subgraphs of the exchanged vertices.
\end{enumerate}, 

Finally, $\ARs$ is weaker than $\AR$: we can convert an RAPG to an RAPSG using the well-known construction below, which also occurs in \cite{Pinsky2023-rx}.

\begin{figure}[htbp]
\centering
\begin{tikzpicture}[
    >={Stealth[width=1.5mm]},
    old/.style ={circle, draw, fill=gray, inner sep=0pt, minimum size=1.7mm},
    new/.style ={circle, draw, fill, inner sep=0pt, minimum size=1.2mm},
]
\begin{scope}[xshift=-3cm]
    \node[old] (a) at (0,0) {};
    \node[old] (b) at (.8,0) {}
        edge[<-] (a);
    \node[old] (c) at (2.2,0) {}
        edge[->, bend left = 30] (b)
        edge[<-, bend right = 30] (b);
    \node[old] (d) at (3,0) {}
        edge[<-] (c);
\end{scope}
\draw[-{Stealth[width=2mm]}, red, thick] (.3, 0) -- (1.7, 0);
\newcommand{\drawstick}[5]{
    \foreach \x [remember=\x as \prev (initially 1)] in {1,...,#2}{
        \node[new] (c#1_\x) at ($(#1)+\x*(#3:#4)+(#3:#5)$) {}
            \ifnum \x > 1
                edge (c#1_\prev)
            \fi;
    }
    \draw (c#1_1) to (#1);
}
\newcommand{\drawDirEdge}[4]{
    \path (#1)--(#2)
        node[new, pos=1/3, yshift = #3] (int1) {}
        node[new, pos=2/3, yshift = #3] (int2) {};
    \drawstick{int1}{2}{#4}{.3}{0};
    \drawstick{int2}{3}{#4}{.3}{0};
    \draw (#1) -- (int1) -- (int2) -- (#2);
}
\begin{scope}[xshift = 2cm]
    \node[old] (a) at (0,0) {};
    \node[old] (b) at (1,0) {};
    \node[old] (c) at (2,0) {};
    \node[old] (d) at (3,0) {};
    \drawDirEdge{a}{b}{0}{90}
    \drawDirEdge{b}{c}{.3cm}{90}
    \drawDirEdge{c}{b}{-.3cm}{270}
    \drawDirEdge{c}{d}{0}{90}
\end{scope}
\end{tikzpicture}
\caption{Convert RAPG to RAPSG\cite{Pinsky2023-rx}}
\end{figure}

Obviously, $\AWF$ implies all these weakenings, so we give a new proof of $\AWF\to\FT$ as well. 

\begin{theorem}
    $\ZFF\vdash\AWF\to\AIE$.
\end{theorem}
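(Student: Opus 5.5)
Given a set $X$, $\AWF$ provides a bijection $h\colon X\to Y$ with $Y\subseteq\WF$. It then suffices to send each $y\in Y$ to its canonical picture $G_y=(\operatorname{trcl}(\{y\}),\ni,y)$ and check three things: each $G_y$ is an extensional APG, each is isomorphism-extensional, and $G_y\cong G_{y'}$ implies $y=y'$. The desired collection is then $\{G_{h(x)}\mid x\in X\}$, with bijection $x\mapsto G_{h(x)}$. The key point is that $\WF$ satisfies foundation internally, so Mostowski-style arguments by $\in$-induction (equivalently, induction on rank) are available there even without $\AF$.

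Accessibility and extensionality are routine. Every element of $\operatorname{trcl}(\{y\})$ is reached from $y$ by a finite $\ni$-chain, which gives accessibility. For extensionality, note that for $a\in\operatorname{trcl}(\{y\})$ we have $C_{G_y}(a)=a$ by transitivity of $\operatorname{trcl}(\{y\})$. Hence $C(a)=C(b)$ means $a=b$ as sets, by the axiom of extensionality, which holds in $\ZFF$.

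For isomorphism-extensionality and non-isomorphism, I will prove one lemma: if $a,b\in\WF$ and $\pi\colon(\operatorname{trcl}(\{a\}),\ni,a)\cong(\operatorname{trcl}(\{b\}),\ni,b)$, then $\pi$ is the identity and $a=b$. The proof is by $\in$-induction on $c\in\operatorname{trcl}(\{a\})$ (valid since $\operatorname{trcl}(\{a\})\subseteq\WF$), showing $\pi(c)=c$. Since $\pi$ preserves children, $\pi(c)=\{\pi(d)\mid d\in c\}$, and the inductive hypothesis gives $\pi(d)=d$ for each $d\in c$, so $\pi(c)=c$. Applying this at $c=a$ gives $b=\pi(a)=a$.

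For the descendant subgraph, observe that $G_y[a]$ is exactly the canonical picture of $a$: the vertices reachable from $a$ by $\ni$-paths form $\operatorname{trcl}(\{a\})$, and the edges are the restriction of $\ni$. So $G_y[a]\cong G_y[b]$ forces $a=b$ by the lemma, which is isomorphism-extensionality. The same lemma applied to $y,y'\in Y$ shows that the $G_y$ are pairwise non-isomorphic. The only delicate step is justifying the $\in$-induction inside $\WF$ without $\AF$ in the ambient universe. I would do this by induction on rank, using that elements of a well-founded set are well-founded of smaller rank; $\operatorname{trcl}$ preserves membership in $\WF$ for the same reason.
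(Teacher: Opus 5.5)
Your proof is correct and follows the same route as the paper. You move along the $\AWF$ bijection into $\WF$ and use canonical pictures; their extensionality, isomorphism-extensionality and pairwise non-isomorphism are exactly the content of $\WF\vDash\FAFA_2$, which the paper cites. Your rank-induction lemma just unpacks that citation, and because you work with the canonical pictures directly in the full universe, you also avoid the absoluteness check hidden in the paper's phrase ``$\AIE$ holds in the whole universe''.
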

\begin{proof}
    $\AWF$ bijectively maps every set to a set in $\WF$. Since $\WF\vDash\AIE$ (followed from $\WF\vDash\FAFA_2$), $\AIE$ holds in the whole universe.
\end{proof}

Naturally, one may ask if $\AIE$ can prove $\AWF$ or $\FAFA_2$, and the answer is negative. In fact, we can prove that $\AWF$ and $\FAFA_2$ are independent over $\ZFF$, and $\AIE$ is weaker than both of them becomes an obvious corollary.

\begin{theorem}
    If $\ZFF$ is consistent, it can't prove $\AC\to\FAFA_2$.
\end{theorem}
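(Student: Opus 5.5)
We need a model of $\ZFF+\AC$ in which $\FAFA_2$ fails. The plan is to build one with a set whose canonical picture is not extensional, or not isomorphism-extensional. The most direct witness is a pair of distinct Quine atoms, $a=\{a\}$ and $b=\{b\}$ with $a\neq b$. Their canonical pictures $(\{a\},\ni,a)$ and $(\{b\},\ni,b)$ are both single-vertex loops, so they are isomorphic. Equivalently, in the canonical picture of $\{a,b\}$ the vertices $a$ and $b$ have the same descendant subgraph up to isomorphism, and in fact also the same child-set pattern. Hence that canonical picture is not isomorphism-extensional, and $\FAFA_2$ fails.

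So it suffices to show that $\ZFCF$ plus the existence of two distinct Quine atoms is consistent relative to $\ZFF$. I would argue in two steps.

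First, move from the consistency of $\ZFF$ to that of $\ZF$, by passing to the well-founded part $\WF$, which satisfies $\ZF$. From there go to $\ZFC$ via $L$.

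Second, starting from a model $M$ of $\ZFC$, build a model of $\ZFCF$ with a proper class, or just a set, of Quine atoms. The standard method is a Rieger–Bernays permutation of the membership relation. Fix two distinct sets, say $0$ and $1$ in $M$, and let $\pi$ be the transposition swapping $0$ with $\{0\}$ and $1$ with $\{1\}$. Note that $0,\{0\},1,\{1\}$ are only three objects, since $\{0\}=1$, so this choice is too clumsy. Instead take four distinct sets $u,\{u\},v,\{v\}$, for instance with $u=\{\{\emptyset\}\}$-type ranks chosen so that all four are distinct. Then let $\pi$ swap $u\leftrightarrow\{u\}$ and $v\leftrightarrow\{v\}$ and fix everything else. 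Define the new membership $x\in_\pi y$ iff $x\in\pi(y)$. By the Rieger–Bernays theorem (Forster–Kaye's form), $(M,\in_\pi)$ satisfies $\ZF$ minus foundation, and it satisfies $\AC$ as well, since choice is preserved: every set in the new model has the same elements as some set in $M$, and functions are coded by sets via the permutation. In $(M,\in_\pi)$ the extension of $u$ is $\pi(u)=\{u\}$, so $u=\{u\}$ there, and likewise for $v$, with $u\neq v$.

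The main obstacle is verifying that $\AC$ transfers, because pairs and functions are defined through $\in_\pi$ rather than $\in$. I would handle this by the standard observation that $\in_\pi$-sets are exactly the $\in$-sets, reindexed by a definable class bijection $\pi$. A choice function in $M$ for the family $\{\pi(y):y\in\pi(F)\}$ can therefore be translated, since $\pi$ is a definable set-like bijection moving only four points. I expect this translation to be routine. An alternative, if the bookkeeping gets heavy, is to cite the known result that Boffa's axiom or $\AFA$ is consistent with $\ZFC$ relative to $\ZF$ (Aczel, Forster): any model of $\ZFCF+\AFA$ has a unique Quine atom. So instead I would cite $\ZFCF+\BAFA$, which gives many distinct Quine atoms and so directly refutes $\FAFA_2$. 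Either route yields the theorem.
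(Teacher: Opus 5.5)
Your proof is correct. Your fallback, citing the consistency of $\ZFCF+\BAFA$ whose many Quine atoms refute $\FAFA_2$, is exactly the paper's first proof.

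Your main route is genuinely different, though close in spirit to the paper's second proof. Both produce two distinct Quine atoms, but in different ways:
\begin{itemize}
\item The paper takes a $\ZFCA$ model with two atoms $a,b$, sets $R=\{(a,a),(b,b)\}$, and applies its transfer theorem for $\ZFA$ models.
\item You pass from $\ZFF$ to $\ZF$ via $\WF$ and to $\mathsf{ZFC}$ via $L$. You then apply the Rieger--Bernays twist $x\in_\pi y\iff x\in\pi(y)$ by a definable permutation that moves only four sets.
\end{itemize}

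Your route is more elementary and self-contained: it needs no atoms, no permutation models and no transfer theorem. The step you flag as the main obstacle is easy. If $\{\pi(y):y\in\pi(F)\}$ has a transversal $C'$ in $M$, then $\pi^{-1}(C')$ is a transversal of $F$ in $(M,\in_\pi)$. Equivalently, the disjoint-family form of $\AC$ is stratified, hence preserved.

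What the paper's route buys is reusability. The same transfer theorem drives the later ordered Mostowski and small-kernel constructions, where $\AC$ must fail. A Rieger--Bernays twist of a $\mathsf{ZFC}$ model cannot serve there, since it preserves $\AC$.

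Two small cleanups:
\begin{itemize}
\item Canonical pictures are always extensional, because the children of a vertex are just its elements. So only isomorphism-extensionality fails for $\{a,b\}$, and the remark about the ``same child-set pattern'' should be dropped.
\item An explicit choice such as $u=0$, $v=2$ gives the four distinct sets $0,1,2,\{2\}$. This makes the required distinctness concrete.
\end{itemize}
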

\begin{proof}
    Consider the canonical model of $\ZFCF+\BAFA$\cite{Aczel1988-vp}. It's a model of $\neg\FAFA_2$, since $\FAFA_2$ contradicts $\BAFA$.
\end{proof}

However, for readers who might not be so familiar with non-well-founded set theory, we provide another proof using techniques from set theory with atoms. The following transfer theorem is useful because we can use permutation models under $\ZFA$, and we need that to construct counterexample models and prove independence results.

\begin{theorem}\label{thm:transfer}
    Given a model $(V,\in,A)$ of $\ZFA$ and $R$, a definable set-like relation on $A$ inside the model. Suppose $R$ is extensional and ill-founded, which means $\forall a,b(\forall c(cRa\leftrightarrow cRb)\to a=b)$ and $\forall a\exists b(bRa)$. We can construct a model $(W,\bar{\in})$ of $\ZFF$ and a surjection $\pi:V\to W$ respecting the $\in$ relation, such that $\pi$ is an isomorphism between the pure-part of $V$ and the well-founded part of $W$, an injection on $A$, and for every atom $a\in A$ we have $\pi(a)=\{\pi(b)\mid bRa\}^W$. If $V$ is a model of $\ZFAC$ then we get a model of $\ZFFC$, if $V$ satisfies $\AC$ then we get a model of $\AC$(the assertion that every set is in bijection with an ordinal).
\end{theorem}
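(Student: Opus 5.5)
We build $W$ as a class of ``trees/graphs up to equivalence'' inside $V$, in the style of the standard construction of models of anti-foundation axioms and of the Forti--Honsell / Boffa-type constructions.

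\textbf{Step 1: the relation $\in^*$.} Inside $V$, define a new set-like relation $\in^*$ on all of $V$ by
\[
u \in^* v \iff (v\notin A \wedge u\in v) \vee (v\in A \wedge u\in A\wedge uRv).
\]
So atoms are reinterpreted as sets whose ``members'' are their $R$-predecessors, and ordinary sets keep their members. This relation is definable and set-like, since $R$ is set-like. Because $R$ is ill-founded in the strong sense $\forall a\exists b\,(bRa)$, no atom is $\in^*$-empty. Hence the only $\in^*$-empty object is $\emptyset$, and atoms and sets never collide trivially.

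\textbf{Step 2: the quotient $W$.} Let $\sim$ be the largest bisimulation on $(V,\in^*)$. Define it as the union of all bisimulations that are sets restricted to transitive closures; this is definable because $\in^*$ is set-like, and the $\in^*$-transitive closure of an element is a set by replacement and induction on $\omega$.

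If $\sim$ is the identity, set $W=V$, $\bar\in=\in^*$, and $\pi=\mathrm{id}$. Otherwise, take $W$ to be the class of $\sim$-classes, using Scott's trick, which requires that the $\in^*$-rank be well defined. Here is a subtle point. Since $V$ has no foundation for $\in^*$, Scott's trick fails. So instead I would choose canonical representatives as the pure sets obtained by decorating. For a well-founded $v$ this gives the Mostowski collapse. For ill-founded $v$, I would take representatives in a system such as $\{(\text{APG},\text{point})\}$ up to bisimulation, coded by pure sets of minimal rank in the kernel of $V$ (every set in $V$ has a pure-coded $\in^*$-picture of bounded rank). Then $\pi(v)$ is the class of pictures bisimilar to the $\in^*$-picture of $v$, and $[p]\,\bar\in\,[q]$ holds iff some child of the point of $q$ is bisimilar to $p$.

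\textbf{Step 3: verify the claimed properties.} We check the following in order.

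(i) $\pi$ is surjective by construction, and it respects membership.

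(ii) On pure sets, $\in^*=\in$ is well-founded, so the largest bisimulation is equality there (by $\in$-induction). Moreover, the well-founded part of $W$ consists exactly of classes of well-founded pictures, which are the pure sets. This gives the isomorphism between the pure part of $V$ and the well-founded part of $W$.

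(iii) Injectivity on $A$ is the step I expect to be the main obstacle. Distinct atoms $a\ne b$ could a priori be bisimilar. Extensionality of $R$ alone does not rule this out, because bisimilar but distinct predecessors could exist. I would handle it by first replacing $\sim$ on $A$ with the largest bisimulation of $(A,R)$, or by arguing that the construction needs strong extensionality. If only extensionality is given, I would modify $\in^*$ so that each atom $a$ additionally contains a tag element, e.g.\ a pure set coding $a$'s identity is unavailable, so instead rely on the intended applications, where $R$ is chosen to be strongly extensional. Possibly the intended reading of ``extensional'' should be upgraded. Then $\pi(a)=\{\pi(b)\mid bRa\}^W$ follows directly from the definition of $\bar\in$.

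(iv) The axioms of $\ZFF$ hold in $(W,\bar\in)$. Extensionality holds because $\sim$ is the maximal bisimulation. Pairing, union, power set, separation and replacement (respectively collection) are transferred from $V$ by taking images of sets under $\pi$. Power set is the delicate case: one needs every $\bar\in$-subset of $\pi(v)$ to be $\pi$ of a set, which holds since subclasses of a set of classes are represented by subsets of $V$.

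(v) $\AC$ transfers. If $V\models\AC$, a well-ordering of a representing set pushes forward, and choice functions pick minimal representatives.
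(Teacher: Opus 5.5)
\textbf{There is a genuine gap: the quotient by the largest bisimulation cannot give injectivity on $A$.}

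Your construction quotients $(V,\in^*)$ by its \emph{largest} bisimulation, and this choice is incompatible with the statement. The hypothesis is only that $R$ is extensional. In the paper's own applications $R$ is not strongly extensional:
\begin{itemize}
\item for $R=\{(a,a),(b,b)\}$ the two Quine atoms are bisimilar;
\item for the dense order of the Ordered Mostowski model (no least element) the full relation $A\times A$ is a bisimulation.
\end{itemize}
So your $\sim$ collapses the atoms to a single Quine atom, and $\pi$ is not injective on $A$. Upgrading the hypothesis to strong extensionality proves a different theorem, one that is useless exactly where the paper needs it: two distinct Quine atoms to refute $\FAFA_2$, and distinct atoms with non-isomorphic initial segments.

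Step~2 has two further problems. First, Scott's trick does not need $\in^*$ to be well-founded. $V\models\ZFA$ has a rank function over the atoms, so you can cut classes down to their elements of minimal $V$-rank. Second, choosing canonical representatives among pure sets cannot work once $\pi$ is injective on $A$. In the Ordered Mostowski model it would inject $A$ into a pure set and so well-order $A$; this is precisely why the converted model fails $\AWF$.

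\textbf{The missing idea: take the least identification, not the greatest.}

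An atom $a$ and the set $\{b\mid bRa\}$ always have the same $\in^*$-members. So $(V,\in^*)$ is never extensional, and your case ``$\sim$ is the identity'' never occurs. But only what extensionality forces should be identified. Define $\approx$ by recursion on $V$-rank, which is legitimate because $\in$ is well-founded in $V$:
\begin{itemize}
\item distinct atoms are never $\approx$-related;
\item for an atom $a$ and a set $x$, $a\approx x$ iff every $R$-predecessor of $a$ is $\approx$ some member of $x$ and conversely;
\item two sets are $\approx$-related iff their members correspond in the same way.
\end{itemize}

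When checking transitivity, the case $a\approx x\approx a'$ shows that $a$ and $a'$ have the same $R$-predecessors, using inductively that $\approx$-related atoms are equal. Hence $a=a'$ by extensionality of $R$. This is exactly where plain extensionality suffices. The same computation gives extensionality of $W=V/{\approx}$ and injectivity of $\pi$ on $A$. The identity $\pi(a)=\pi(\{b\mid bRa\})=\{\pi(b)\mid bRa\}^W$ is then immediate.

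The hypothesis $\forall a\exists b\,(bRa)$ gives, by a minimal-rank argument, that no atom is $\approx$ a pure set and that every $\pi(a)$ is ill-founded in $W$. Hence $\pi(y)\in\WF^W$ iff $y$ is pure. Also $\approx$ is equality on pure sets by $\in$-induction. Together these give the isomorphism between the pure part of $V$ and $\WF^W$.

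With $\approx$ in place of $\sim$, your verification of the axioms, collection and $\AC$ in (iv)--(v) goes through.
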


 Notice that, as usual, we assume that $\ZFA$ and $\ZFF$ are both formed with the replacement schema throughout this paper, while it's widely known that in these systems replacement can't prove the collection principle\cite{Yao2026-rc}. So we denote the systems that use the stronger collection instead of replacement by $\ZFAC$ and $\ZFFC$ in our statement of this transfer theorem.

\begin{proof}
    Modify the proof in \cite{Brian2013-jz}.
\end{proof}

Now we can present another proof of the claim proved above.
\begin{proof}
    Consider the $\ZFCA$ model with exactly two atoms $a,b$ and $R=\{(a,a),(b,b)\}$ and use the transfer theorem. Now, the converted model has two different Quine atoms $\pi(a)$ and $\pi(b)$, contradicting $\FAFA_2$. 
\end{proof}

The next proof also makes use of the transfer theorem.

\begin{theorem}
    If $\ZFF$ is consistent, it can't prove $\FAFA_2\to\AWF$.
\end{theorem}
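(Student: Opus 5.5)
We will build a model of $\ZFF+\FAFA_2+\neg\AWF$ by applying the transfer theorem (Theorem~\ref{thm:transfer}) to a permutation model. The idea is to use atoms whose $R$-pictures are pairwise non-isomorphic extensional pictures, so that $\FAFA_2$ survives the transfer. We then make the set of atoms (or a related set) fail to be in bijection with any well-founded set by means of symmetry.

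\textbf{Choice of atoms and $R$.} Take a countable set of atoms $A=\{a_{n,i}\mid n\in\omega, i\in\{0,1\}\}$. Let $R$ be the relation making each atom's $R$-descendant picture rigid, ill-founded and pairwise distinct, but with the atoms in each pair $\{a_{n,0},a_{n,1}\}$ interchangeable at the level of the pure part. A cleaner alternative is to keep $R$ rigid with distinct pictures and obtain the failure of $\AWF$ from a different set. Concretely, we first try $\pi(a)$ distinct non-well-founded sets with pairwise non-isomorphic canonical pictures, for example infinite descending chains coding distinct reals.

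The tension is the following. If every atom has a rigid, distinct picture, then the canonical picture of each $\pi(a)$ is an APG, and its isomorphism type is a well-founded object (a set of codes). That already gives an injection into $\WF$, so $\AWF$ might hold. The failure of $\AWF$ must therefore come from a set whose elements cannot be uniformly coded in the permutation model. The natural witness is $P=\{\pi(\{a_{n,0},a_{n,1}\})\mid n\in\omega\}$ or $\pi(A)$, where the permutation model lacks a choice function for the pairs.

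\textbf{Plan.} Work in the second Fraenkel-type model: permutations fixing each pair $\{a_{n,0},a_{n,1}\}$ setwise, with finite supports. Choose $R$ so that $a_{n,0}$ and $a_{n,1}$ have non-isomorphic pictures in the sense of $\FAFA_2$ inside $W$, while the swap of $a_{n,0}$ and $a_{n,1}$ is still an automorphism of $(A,R)$. This is impossible if the pictures are non-isomorphic via $R$ itself. So we instead let $R$ be defined in $V$ only up to this symmetry: $a_{n,i}Rb$ iff $b=a_{n,1-i}$, a two-cycle. Each pair then becomes $\pi(a_{n,0})=\{\pi(a_{n,1})\}$ and $\pi(a_{n,1})=\{\pi(a_{n,0})\}$. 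The two canonical pictures are isomorphic, which violates $\FAFA_2$. This attempt fails, and I expect this to be the main obstacle: $\FAFA_2$ forces distinct sets to have non-isomorphic pictures, and isomorphism types of APGs living on well-founded-codable structures tend to yield $\AWF$.

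The way around it is to make the pictures non-isomorphic without the permutation model being able to choose, uniformly, well-founded codes of the vertex sets. I would take the pure part to supply no bijection. Let each $\pi(a)$ have a picture whose vertex set includes infinitely many atoms that are moved by permutations fixing $a$. The isomorphism type of the picture is then a proper-class-like object, or at least not a set in bijection with a well-founded set, because the vertex set itself is an amorphous-like or non-well-orderable set of atoms. For example, take $A=\bigcup_n A_n$ with each $A_n$ infinite and with permutations fixing each $A_n$ setwise. Let $R$ make each $a\in A_n$ point to all of $A_{n+1}$, adjusted so that $R$ is extensional. Then:
\begin{enumerate}
\item verify extensionality and $\FAFA_2$ in $W$ by computing pictures, where the level structure distinguishes the atoms;
\item show $\neg\AWF$: the set $\pi(A_0)$ injecting into $\WF$ would transfer back to a symmetric injection of $A_0$ into the pure part, which is impossible by the standard finite-support argument;
\item check the remaining $\ZFF$ axioms, which are supplied by Theorem~\ref{thm:transfer}.
\end{enumerate}

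Step 1 is the hard part. Atoms within one $A_n$ are automorphic under $R$, and hence have isomorphic pictures. So $R$ must also distinguish them, while finite supports still block any well-ordering. I would try to reconcile the two by giving the atoms within $A_n$ distinct but non-uniformly-definable pictures, pointing to different infinite subsets of $A_{n+1}$.
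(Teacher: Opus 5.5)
Your proposal does not produce a model, so there is a genuine gap. You correctly locate the difficulty, but Step~1 is left as a hope (``distinct but non-uniformly-definable pictures''). The one concrete structure you write down, with each $a\in A_n$ pointing to all of $A_{n+1}$, gives atoms of the same level identical sets of $R$-predecessors. So $R$ is not extensional, the transfer theorem does not apply, and no repair is specified. Neither $\FAFA_2$ nor $\neg\AWF$ is ever verified.

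More seriously, the obstacle you diagnose rests on a false inference. You claim that atoms automorphic under $R$ must have isomorphic pictures. That is true in $V$, but $\FAFA_2$ is evaluated in the transferred model. There an isomorphism of pictures must come from a map lying in the permutation model, i.e.\ a finitely supported one, and the restriction of a permutation to the descendants of $a$ typically is not finitely supported. Your inference would in fact rule out the whole strategy:
\begin{itemize}
\item any $\sigma\in\mathcal{G}$ that fixes $R$ and moves an atom makes two atoms automorphic under $R$, so $\FAFA_2$ would fail;
\item if the stabilizer of $R$ (which lies in the filter) is trivial, the permutation model is all of $V$, which satisfies $\AC$, so $\AWF$ holds after transfer.
\end{itemize}
Likewise, your worry that isomorphism types of pictures are ``well-founded codes'' presupposes that their vertex sets are in bijection with well-founded sets. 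That is exactly what must fail.

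The missing idea, which the paper uses, is to exploit this gap between $V$ and the permutation model with a homogeneous structure. Take the Ordered Mostowski Model and let $R$ be the dense order $<$ on the atoms, so each atom is converted into the set of (converted) smaller atoms.
\begin{itemize}
\item \emph{Failure of $\AWF$.} All atoms are conjugate under $\mathcal{G}$, so the set of atoms is not well-orderable. The well-founded part of the converted model is the image of the pure part, which satisfies $\AC$. Hence $\AWF$ would make every set of the Ordered Mostowski Model well-orderable, so $\AWF$ fails.
\item \emph{No isomorphism between distinct initial segments.} Let $f\colon(-\infty,a)\to(-\infty,b)$ with $a<b$ be an order-isomorphism with finite support $E$. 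Then $f(x)\in E\cup\{x\}$ for every $x$. So $f$ misses the infinitely many points of $(a,b)\setminus E$, a contradiction.
\item \emph{$\FAFA_2$.} Distinct converted atoms therefore have non-isomorphic canonical pictures. Converted atoms are definable (transitive sets on which $\in$ is dense). A pseudo-rank induction separating the non-well-founded from the well-founded part then gives isomorphism-extensionality of all canonical pictures, i.e.\ $\FAFA_2$.
\end{itemize}
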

\begin{proof}
    Consider the Ordered Mostowski Model in the context of permutation models. It could be transferred to a model of $\ZFF$ by directly saying that atoms are sets and the less than relation over atoms is the $\in$ relation(and that's why we need the ordering over atoms).

    First, $\AWF$ fails in our converted model. Notice that all the previous atoms in the converted model are non-well-founded, so a set is in the well-founded universe of our converted model if and only if it is a pure set in Ordered Mostowski Model. However, we know that the universe of pure sets in any permutation model must satisfy the axiom of choice, and thus is in bijection with an ordinal. If $\AWF$ holds in our converted model, then every set in the Ordered Mostowski Model will be in bijection with a pure set, thus in bijection with an ordinal, and this is actually equivalent to claiming that the Ordered Mostowski Model satisfies the axiom of choice, an obvious contradiction.

    Standard results on the Ordered Mostowski Model say that for every two atoms $a\neq b$, the ordered structure $(a,\leq)$ and $(b,\leq)$ are never isomorphic; a detailed proof of it could be found in \cite{Pinsky2023-rx}. Now we know that the canonical pictures of all the previous atoms in our converted model are all extensional and isomorphism-extensional: these previous atoms are transitive, so the canonical picture of an atom only adds itself to the initial segment of it; extensionality comes from the fact that the linear order is still dense, and isomorphism-extensionality is just because there is no isomorphism between two different initial segments.

    Now we claim that $\FAFA_2$ holds in our converted model. The property of being converted from an atom is definable in our model: a set has this property if and only if it is transitive and the $\in$ relation on it is dense. So we can define the pseudo-rank function in our model, which is converted from the rank function in the Ordered Mostowski Model and only maps the empty set and all the atoms to $0$. Then an easy induction shows that two canonical pictures are isomorphic if and only if the ``non-well-founded'' parts and the ``well-founded'' parts are respectively isomorphic, but the partial isomorphism over the non-well-founded parts is impossible unless these parts actually coincide, and then the partial isomorphism over the well-founded parts will actually be the identity map. So, two canonical pictures are isomorphic if and only if they are identical, which is an equivalent form of $\FAFA_2$.
\end{proof}

\section{Equivalent forms of Frucht's theorem}

The section above shows that the two weakening chains from $\AWF$ and $\FAFA_2$ to $\ARs$, or equivalently to $\FT$, coincide after the first step $\AIE$, which is strictly weaker than both starting principles.
In this section, we will prove that the rest of this long weakening chain is actually a series of equivalent forms of $\FT$.
The strategy is to make several constructions and reverse the directions of the implications one by one.

\begin{theorem}
    $\ZFF\vdash\ARs\to\AR$.
\end{theorem}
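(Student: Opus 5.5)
This is essentially immediate once we fix how a simple graph is read as a graph. We are given a set $X$ and, by $\ARs$, a collection $\{G_x\mid x\in X\}$ of mutually non-isomorphic RAPSGs indexed bijectively by $X$. From each $G_x=(V_x,E_x,p_x)$ we build the directed graph
\[
\hat G_x=(V_x,\hat E_x,p_x),\qquad \hat E_x=\{(u,v)\mid \{u,v\}\in E_x\},
\]
replacing every undirected edge by a pair of opposite arrows. We then check that $x\mapsto\hat G_x$ witnesses $\AR$ for $X$. The choice of $\hat G_x$ is canonical, so the construction is uniform in $x$ and needs no choice; the resulting map is a bijection onto its image because it is injective, which follows from the non-isomorphism established below.

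\textbf{Accessibility.} An undirected path from $p_x$ in $G_x$ yields a directed path in $\hat G_x$, because each edge is now available in both directions.

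\textbf{Rigidity and non-isomorphism.} A map $h\colon V_x\to V_y$ sending $p_x$ to $p_y$ preserves $\hat E$ in both directions exactly when it preserves the undirected edge sets. This holds because $(u,v)\in\hat E_x$ if and only if $\{u,v\}\in E_x$. Hence pointed isomorphisms $\hat G_x\cong\hat G_y$ are precisely pointed isomorphisms $G_x\cong G_y$. Taking $x=y$ shows that $\operatorname{Aut}(\hat G_x)=\operatorname{Aut}(G_x)$ is trivial, so $\hat G_x$ is a RAPG. Taking $x\neq y$ shows that $\hat G_x\not\cong\hat G_y$.

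Together these give $\AR$. There is no real obstacle here. The only point to watch is that isomorphisms of pointed structures are required to preserve the point, and they do so on both sides of the translation. The symmetric-digraph translation respects both conditions: preserving the point and preserving the edge relation.
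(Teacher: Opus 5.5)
Your proof is correct and is the same as the paper's: replace each undirected edge $\{a,b\}$ by the two arrows $(a,b)$ and $(b,a)$. You also spell out the routine checks of accessibility, rigidity and mutual non-isomorphism, which the paper leaves implicit.
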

\begin{proof}
    Replace each undirected edge $\{a,b\}$ with two directed edges $(a,b)$ and $(b,a)$.
\end{proof}

\begin{theorem}
    $\ZFF\vdash\AR\to\AIE'$.
\end{theorem}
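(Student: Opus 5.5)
The plan is to make each RAPG strongly connected, so that every descendant subgraph is the whole graph with a different point. Then isomorphism-extensionality reduces to rigidity of the underlying unpointed graph. Concretely, suppose the given set $X$ is in bijection with a collection $\{G_x\mid x\in X\}$ of mutually non-isomorphic RAPGs. I would uniformly (hence without any choice) transform each $G=(V,E,p)$ into an APG $G^*=(V^*,E^*,p)$ with three properties: $G^*$ is strongly connected; every automorphism of the unpointed graph $(V^*,E^*)$ restricts to an automorphism of $G$ fixing $p$; and $G\cong H$ if and only if $G^*\cong H^*$. Then $\{G_x^*\mid x\in X\}$ witnesses $\AIE'$.

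\textbf{The construction.} The vertices of $V$ are kept as ``original'' vertices. Each edge $(a,b)\in E$ is replaced by a forward path $a\to m\to b$ through a new vertex $m$, and in addition a backward path $b\to m'\to a$ through a new vertex $m'$ is added. The different kinds of vertex are then tagged by attaching directed cycles that return to their base vertex. A forward midpoint $m$ gets a cycle $m\to c_1\to c_2\to m$ of new vertices, each of in- and out-degree $1$. A backward midpoint gets such a cycle of length $4$. The point $p$ gets one of length $5$. Original vertices get no tag. Since $G$ is accessible, the backward paths make every original vertex reach $p$ and be reached from it, and the tag cycles return to their bases. Hence $G^*$ is strongly connected, and in particular accessible from $p$.

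\textbf{Key steps, in order.}
\begin{enumerate}
\item For any $v\in V^*$, strong connectivity gives $G^*[v]=(V^*,E^*,v)$. So an isomorphism $G^*[u]\cong G^*[v]$ is an automorphism $h$ of the unpointed graph with $h(u)=v$.
\item Show that $h$ preserves the vertex types. Tag vertices are exactly those of in- and out-degree $1$ lying on a cycle all of whose other vertices except one also have in- and out-degree $1$. The base of such a cycle is then classified by the cycle's length. Therefore $h$ maps forward midpoints to forward midpoints, backward midpoints to backward midpoints, and original vertices to original vertices, and it fixes $p$.
\item Conclude that $h\restriction V$ preserves $E$, because $(a,b)\in E$ iff some forward midpoint lies on a path $a\to m\to b$. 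So $h\restriction V$ is an automorphism of the pointed graph $G$, and rigidity gives $h\restriction V=\mathrm{id}$.
\item The midpoints and tag vertices are determined by their neighbours in $V$ together with their position on a tag cycle. Hence $h=\mathrm{id}$ and $u=v$, which is isomorphism-extensionality.
\item The same type-preservation argument shows that a pointed isomorphism $G^*\cong H^*$ restricts to a pointed isomorphism $G\cong H$. The converse is immediate, so mutual non-isomorphism is preserved.
\end{enumerate}

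\textbf{Main obstacle.} I expect the hardest part to be step 2: making the tag invariants genuinely automorphism-invariant. Original vertices of in- and out-degree $1$, and short cycles of the core such as $a\to m\to b\to m'\to a$, might be confused with tag cycles. My first attempt to rule this out would use the fact that every core cycle passes through a midpoint, and midpoints have in- or out-degree at least $2$ because of their tags. If that case analysis gets messy, I would instead lengthen the tags, using cycles of lengths $7,8,9$ formed of degree-$(1,1)$ vertices. Alternatively I would insert an extra degree-distinguishing vertex on every core path, so that no core cycle can consist of degree-$(1,1)$ vertices.
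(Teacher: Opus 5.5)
\textbf{The gap: the one-vertex RAPG.} Your construction fails when $G$ is the one-vertex RAPG $(\{p\},\emptyset,p)$. This graph can certainly occur in the family supplied by $\AR$; it is, for instance, the canonical picture of $\emptyset$. Here $p$ has no edges to subdivide, so in $G^*$ it has in- and out-degree $1$, and $G^*$ is just the directed $5$-cycle $p\to e_1\to e_2\to e_3\to e_4\to p$. A rotation is then a nontrivial automorphism, so $G^*[p]\cong G^*[e_1]$ although $p\neq e_1$. Your step 2 breaks because the tag cycle at $p$ has no base of larger degree to anchor the classification.

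\textbf{The nondegenerate case is fine.} For every other RAPG, accessibility gives $p$ at least one edge of $G$, hence in- and out-degree at least $2$ in $G^*$. The obstacle you flagged then resolves along the lines of your first attempt. The core (original vertices and midpoints) is bipartite, so a core cycle of length $2k$ contains $k$ midpoints, each of in- and out-degree $2$. The case $k=1$ would be a cycle $t\to m\to t$ coming from a loop at $t$, but such a loop gives $t$ two in-neighbours (its forward and backward midpoints), so $t$ is not of degree $(1,1)$. Hence no core cycle mimics a tag, and steps 2--5 go through.

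\textbf{The repair is cheap.} Either treat the one-vertex graph separately (it is already isomorphism-extensional and has fewer vertices than any other $G^*$), or mark $p$ with a loop instead of a $5$-cycle. No other vertex of $G^*$ carries a loop, since loops of $G$ get subdivided.

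\textbf{Comparison with the paper.} With that repair your proof is correct, but it takes a heavier route than the paper. The paper adds just two vertices, a new point $P$ and a sink $q$, with edges $P\to q$, $P\to p$, and $v\to P$ for every original vertex $v$. Then:
\begin{itemize}
\item every descendant subgraph other than $G^*[q]$ (a single vertex) is the whole graph;
\item $q$ is the unique childless vertex, $P$ the unique vertex pointing to $q$, and $p$ the unique other child of $P$, so any isomorphism between such descendant subgraphs fixes $q$, $P$ and $p$;
\item every new edge touches $P$, so deleting $P$ and $q$ recovers $G$ exactly, and the isomorphism restricts to an automorphism of $G$ fixing $p$.
\end{itemize}
Both arguments rest on the same idea: make (almost) every descendant subgraph equal to the whole graph, then invoke rigidity. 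Routing all return edges through one easily identified vertex makes subdivision, the forward/backward distinction and the cycle-length tags unnecessary. It also removes the degree analysis of your step 2, which is exactly where the degenerate case slipped through. Your version yields genuine strong connectivity, which is more than $\AIE'$ needs, and it covers the trivial graph only after a patch, whereas the paper's construction is uniform.
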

\begin{proof}
    We describe the operation presented below that converts an RAPG into an isomorphism-extensional APG. Suppose that the original RAPG has the point $p$ and all other vertices $a,b,\dots$. We will add two new vertices, $P$ and $q$, to form the set of vertices in the new APG, and let $P$ be the point.

    In the new APG, $q$ is the only vertex that has no child, and $P$ is the only vertex that points to $q$, so the descendant subgraphs of them are obviously not isomorphic to any other descendant subgraph.
    The descendant subgraph of every vertex except $q$ is actually the whole APG, because every vertex except $q$ and $P$ points to $P$, and then $p,q$ are both accessible and then everything follows.
    Now suppose to the contrary that the descendant subgraphs of $a$ and $b$ are isomorphic. Such an isomorphism must fix the vertices $q$ and $P$, so it must fix the vertex $p$ since $p$ is the only vertex pointed by $P$ other than $q$. Thus, it induces an automorphism of the original RAPG that commutes $a$ and $b$, a contradiction.

    The same reason explains why two non-isomorphic RAPGs are still non-isomorphic after the operation.
\end{proof}

\begin{figure}[htbp]
\centering
\begin{tikzpicture}[>={Stealth[width=1.5mm]}]
  \node (left) {
    \begin{tikzcd}[ampersand replacement=\&]
      \& p \& \\
      a \& \dots \& b \\
      \dots \&\& \dots
      \arrow[from=1-2, to=2-1]
      \arrow[from=1-2, to=2-2]
      \arrow[from=1-2, to=2-3]
      \arrow[from=2-1, to=3-1]
      \arrow[from=2-3, to=3-3]
    \end{tikzcd}
  };
  \node (right) at (7,0) {
    \begin{tikzcd}[ampersand replacement=\&]
	\& q \& \\
	\& P \\
	\& p \\
	a \& \dots \& b \\
	\dots \&\& \dots
	\arrow[from=2-2, to=1-2]
	\arrow[from=2-2, to=3-2]
    \arrow[from=3-2, to=2-2]
	\arrow[from=3-2, to=4-1]
	\arrow[from=3-2, to=4-2]
	\arrow[from=3-2, to=4-3]
	\arrow[from=4-1, to=2-2]
	\arrow[from=4-1, to=5-1]
	\arrow[from=4-3, to=2-2]
	\arrow[from=4-3, to=5-3]
\end{tikzcd}
  };
  \draw[-{Stealth[width=2mm]}, red, thick] (left.east) -- (right.west);
\end{tikzpicture}
\caption{Convert RAPG to isomorphism-extensional APG}
\end{figure}

\begin{theorem}
    $\ZFF\vdash\AIE'\to\AIE$
\end{theorem}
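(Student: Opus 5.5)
Given a set $X$, $\AIE'$ provides a bijection $x\mapsto G_x$ onto a collection of mutually non-isomorphic isomorphism-extensional APGs. The plan is to convert each $G_x$ uniformly, with no choices, into an extensional and isomorphism-extensional APG $G_x'$, so that non-isomorphic inputs give non-isomorphic outputs. The natural candidate is the quotient by ``having the same children'', iterated. Isomorphism-extensionality already gives strong rigidity: distinct vertices $a\neq b$ have non-isomorphic descendant subgraphs $G[a]\not\cong G[b]$. I will exploit this to show that vertices with equal child sets cannot exist, so that $G$ is already extensional.

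Suppose $C_G(a)=C_G(b)$ with $a\neq b$. I will try to build an isomorphism $G[a]\cong G[b]$ directly. The vertex sets satisfy
\[
V_{G[a]}=\{a\}\cup\bigcup_{c\in C(a)}V_{G[c]},
\]
and similarly for $b$. So both consist of the point together with the common union $U=\bigcup_{c\in C(a)}V_{G[c]}$. The candidate map is the identity on $U\setminus\{a,b\}$ and sends $a\mapsto b$, extended suitably on $U\cap\{a,b\}$. Namely, if $a\in U$ (so $a$ is reachable from itself through its children), then $b$ is reachable from $a$, and symmetrically. In that case the transposition $\sigma$ swapping $a$ and $b$ is a map of $V_{G[a]}=V_{G[b]}$ to itself. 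It preserves edges because $a,b$ have the same out-neighbours, and an edge $c\to a$ is sent to $c\to b$. This requires checking in-edges as well, which is exactly the delicate point.

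This in-edge check is the main obstacle. Swapping $a,b$ respects out-edges but not necessarily in-edges: some $c$ may point to $a$ but not to $b$. I would handle it by case analysis. If neither $a$ nor $b$ lies in $U$, then the map which is the identity on $U$ and sends $a\mapsto b$ is an isomorphism $G[a]\to G[b]$, since no edges enter $a$ inside $G[a]$. If exactly one of them lies in $U$, then reachability from a common child forces the other into $U$ as well: $a\in U$ means $a$ is reachable from some child $c$ of $a$, and $c$ is also a child of $b$, so $a\in V_{G[b]}$ and hence $b$ reaches $a$. If both lie in $U$, the swap can fail, and I would fall back on a different strategy: pass to the extensional quotient. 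Define $\equiv$ as the largest bisimulation-like congruence, equivalently iterate identification of vertices with equal child sets transfinitely, taking unions at limits. This is choice-free and definable from $G_x$. Then show that the quotient $G_x/{\equiv}$ remains isomorphism-extensional, using that $G[a]\to (G/{\equiv})[[a]]$ is canonical, so that an isomorphism between quotient descendant subgraphs lifts. I expect this lifting to be the hard step, because lifting isomorphisms through a quotient is generally false.

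If lifting fails, an alternative is to reuse the trick from the proof that $\AR\to\AIE'$. Isomorphism-extensional APGs are rigid, so $\AIE'\to\AR$, and it suffices to show $\AR\to\AIE$. Starting from an RAPG, I would first subdivide each edge $u\to v$ with a fresh vertex $e_{uv}$, so that $u\to e_{uv}\to v$; the new vertex has the unique child $v$, so distinct edge-vertices with the same head differ only by their tail. Then I would attach to each original vertex a distinguishing gadget, for example the $P,q$ construction, with each vertex also pointing to a private chain witnessing its identity. This is arranged so that distinct vertices have distinct child sets. Extensionality then follows by design, isomorphism-extensionality follows from rigidity exactly as in the earlier proof, and non-isomorphism between different graphs is preserved by the same argument.
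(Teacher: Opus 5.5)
Your proposal has a genuine gap: none of the three routes reaches $\AIE$.

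\textbf{The opening claim is false.} An isomorphism-extensional APG need not be extensional. Take the APG with point $b$, vertices $a,b$ and edges $b\to a$, $a\to a$. Here $G[a]$ has one vertex and $G[b]$ has two, so the APG is isomorphism-extensional, yet $C(a)=C(b)=\{a\}$. This is exactly your ``exactly one lies in $U$'' case ($a\in U=\{a\}$, $b\notin U$). Your argument there only shows that $b$ reaches $a$, not that $b$ is reachable from a child of $b$.

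\textbf{The quotient fallback fails, but not where you expect.} Identifying $a$ with $b$ (by equal child sets or by bisimilarity) produces the one-vertex loop. That loop is itself an isomorphism-extensional APG not isomorphic to $G$. So two distinct members of a family supplied by $\AIE'$ can have isomorphic quotients, and the bijection required by $\AIE$ is destroyed. The lifting step you flag is actually harmless. Every bisimulation on the quotient by the largest bisimulation is contained in the identity, and the graph of an isomorphism $G[a]\cong G[b]$ is a bisimulation, so that quotient is automatically isomorphism-extensional. What fails is injectivity on isomorphism types, which you never address. Also, iterating the equal-children identification is not equivalent to taking the largest bisimulation: a one-way infinite chain is already extensional but is bisimilar to a loop.

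\textbf{The last fallback restates the goal without achieving it.} Reducing to $\AR\to\AIE$ is legitimate but does not touch the difficulty. The step ``arranged so that distinct vertices have distinct child sets'' is exactly what must be proved, and your concrete ingredients work against it:
subdividing $u\to v$ and $u'\to v$ gives $e_{uv}\neq e_{u'v}$ with the same child set $\{v\}$, since tails are parents and extensionality only sees children; private chains ending in sinks produce several vertices with empty child set; and one-way infinite chains violate isomorphism-extensionality.

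\textbf{The missing idea} is a private gadget whose child set names its owner. The paper adds, for every vertex $a$ with $C(a)\neq\{a\}$, a fresh vertex $p_a$ with edges $p_a\to p_a$, $a\to p_a$, $p_a\to a$. This works as follows.
\begin{itemize}
\item No other old vertex has $p_a$ in its child set, so $p_a$ distinguishes $C'(a)$.
\item $C'(p_a)=\{p_a,a\}$ determines $a$.
\item $C'(a)\neq C'(p_a)$ precisely because $C(a)\neq\{a\}$. Vertices with $C(a)=\{a\}$ are left alone, and after the change their child set $\{a\}$ is unique.
\end{itemize}
The new vertices are exactly those with a loop and exactly two children. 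This property is visible inside every descendant subgraph, so any isomorphism $f\colon G'[u]\to G'[v]$ sends old vertices to old ones and $p_a$ to $p_{f(a)}$. It therefore restricts to an isomorphism between descendant subgraphs of $G$. In the case $u=p_a$, $v=p_b$, use that $G'[p_a]$ and $G'[a]$ have the same vertex set. This gives isomorphism-extensionality of $G'$ and preservation of mutual non-isomorphism. The construction is uniform in $G$, so no choice is needed.
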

\begin{proof}
    If an isomorphism-extensional APG is not extensional, consider the following two kinds of vertices in it: a vertex of the first kind has only one child, which is itself, and the others form the second kind. Obviously, vertices of the first kind won't have the same set of children as any other vertices, so we just need to deal with vertices of the second kind. For every vertex $a$ of the second kind, we add a new vertex $p_a$ and three new edges: $p_a\to p_a,a\to p_a,p_a\to a$. Now, $p_a$ points to $p_a$ and $a$, and $a$ either points to $p_a$ but not to $a$ itself, or points to $p_a$, $a$, and some other vertices. Thus, the new APG is clearly extensional. It's direct to check that this operation doesn't destroy isomorphism-extensionality and mutual non-isomorphism.
\end{proof}

\begin{figure}[htbp]
\centering
\begin{tikzpicture}[>={Stealth[width=1.5mm]}]
  \node (left) {
\begin{tikzcd}
	\dots & a & \dots
	\arrow[from=1-1, to=1-2]
	\arrow[from=1-2, to=1-3]
\end{tikzcd}
  };
  \node (right) at (7,0) {
\begin{tikzcd}
	& {p_a} & \\
	\dots & a & \dots
	\arrow[from=1-2, to=1-2, loop, in=55, out=125, distance=10mm]
	\arrow[from=1-2, to=2-2]
	\arrow[from=2-1, to=2-2]
	\arrow[from=2-2, to=1-2]
	\arrow[from=2-2, to=2-3]
\end{tikzcd}
  };
  \draw[-{Stealth[width=2mm]}, red, thick] (left.east) -- (right.west);
\end{tikzpicture}
\caption{Extensionalize an isomorphism-extensional APG}
\end{figure}

Hamkins and Palumbo consider the weak choice principle $\RR$ in \cite{Hamkins2012-vm}, which states that every set admits a rigid binary relation on it.
If we only need every set to be embedded into the set of vertices of a rigid graph, this principle becomes weaker and is provable in $\ZF$ since $\AF$ ensures that $\operatorname{trcl}(\{x\})$ always carries an RAPG structure.
We will denote this natural weakening by $\WRR$.
In fact, it is also equivalent to $\FT$.

\begin{theorem}
    $\ZFF\vdash\AR\leftrightarrow\WRR$.
\end{theorem}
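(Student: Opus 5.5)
The plan is to prove each direction by a direct graph construction. The only delicate point is making sure that isomorphisms are forced to respect the distinguished vertices, and I expect this to be the main obstacle.

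\textbf{$\AR\to\WRR$.} Fix a set $X$. By $\AR$, choose a family $\{G_x\mid x\in X\}$ of mutually non-isomorphic RAPGs with $G_x=(V_x,E_x,p_x)$, and make the vertex sets pairwise disjoint by tagging, e.g.\ replacing $v$ by $(x,v)$. For each $x$ add a fresh vertex $s_x$ and one edge $s_x\to p_x$, giving a graph $H_x$. In $H_x$, the vertex $s_x$ is the unique vertex that has in-degree $0$ and from which every vertex is reachable. Indeed, any other vertex $v$ with the second property would have to reach $s_x$, which is impossible since $s_x$ has no incoming edges.

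Let $H$ be the disjoint union of the $H_x$, and map $x\mapsto s_x$; this is an injection of $X$ into the vertex set of $H$. To see that $H$ is rigid, let $g\in\operatorname{Aut}(H)$. The map $g$ sends weak (undirected) components to weak components, and each $H_x$ is weakly connected, so $g$ maps $H_x$ onto some $H_y$. By the characterization above, $g(s_x)=s_y$, hence $g(p_x)=p_y$. Thus $g$ restricts to a pointed isomorphism $G_x\cong G_y$, which forces $y=x$. Rigidity of $G_x$ then makes $g$ the identity on $G_x$, and $g$ fixes $s_x$ as well. So $g=\mathrm{id}$ and $\WRR$ holds.

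\textbf{$\WRR\to\AR$.} Fix $X$. By $\WRR$ there are a set $Y$, a rigid relation $R\subseteq Y^2$ and an injection $i:X\to Y$. For each $x\in X$, define a pointed graph $G_x$ with vertex set $Y\cup\{p,m_1,m_2\}$, where $p,m_1,m_2$ are fresh, and point $p$. Its edges are $R$ together with
\[
p\to m_1,\qquad m_1\to y \ (y\in Y),\qquad m_1\to m_2,\qquad m_2\to p,\qquad m_2\to i(x).
\]
Every vertex is reachable from $p$, so $G_x$ is accessible. The family $\{G_x\mid x\in X\}$ exists by replacement, and $x\mapsto G_x$ is a bijection onto it once we show the $G_x$ are mutually non-isomorphic.

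Let $h:G_x\to G_{x'}$ be a pointed isomorphism. Then:
\begin{itemize}
\item $h(p)=p$, because $h$ preserves the point;
\item $h(m_1)=m_1$, because $m_1$ is the unique child of $p$;
\item $h(m_2)=m_2$, because $m_2$ is the unique child of $m_1$ having an edge to $p$ (no $y\in Y$ points to $p$).
\end{itemize}
Hence $h$ maps $Y$ onto $Y$ and preserves $R$. Rigidity of $R$ gives $h\restriction Y=\mathrm{id}$. Since $h$ preserves the edge $m_2\to i(x)$, we get $i(x)=i(x')$, hence $x=x'$. Taking $x=x'$ shows that each $G_x$ is rigid, and taking $x\neq x'$ shows that distinct $G_x$ are non-isomorphic. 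This gives $\AR$.
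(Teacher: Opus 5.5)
Your proposal is correct and follows essentially the same strategy as the paper: in each direction a direct construction forces every isomorphism to fix the added vertices, the problem reduces to rigidity of the underlying graph, and the index is read off from a marker edge. The differences are only in the gadgets. For $\AR\to\WRR$ the paper adds one common root pointing to the points of all the $R_x$, rather than a separate source $s_x$ per component. For $\WRR\to\AR$ it uses a single new point $p_x$ with $p_x\to v$ for all $v$ and the back edge $f(x)\to p_x$, so your $m_1,m_2$ vertices are unnecessary.
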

\begin{proof}
    If $\AR$ holds, any set $X$ is bijectively mapped to a collection $\{R_x\mid x\in X\}$ of mutually non-isomorphic RAPGs. Now add a new vertex that points to the vertices of every $R_x$ and itself, this large graph is obviously rigid: we must fix the new vertex because it's the only vertex such that every vertex is accessible from it; then, we can't exchange the points of two different $R_x$ and $R_y$ because this requires an isomorphism between $R_x$ and $R_y$; finally, we must fix every vertex in every $R_x$ because every $R_x$ is rigid.

    If $\WRR$ holds, fix an injection $f:X\to V$ where $V$ is the set of vertices of a rigid graph $G=(V,E)$. For every $x\in X$, consider the following APG $G_x$: the point is a new vertex $p_x$, the vertices are $\{p_x\}\cup V$, and the edges are the edges of $V$ together with $p_x\to v$ for every $v\in V$ and $f(x)\to p_x$. Now $G_x$ is rigid because if an automorphism fixes $p_x$, then the rest of this automorphism is just an automorphism of $G$, so it must fix all $v\in V$ because $G$ is rigid. Similarly, $G_x$ is not isomorphic to $G_y$ because if such an isomorphism exists, it will induce an automorphism of $G$ that exchanges $f(x)$ and $f(y)$, which is impossible.
\end{proof}

\section{Variants of Frucht's theorem I: weak Frucht's theorem}

Our discussions above show that $\FT$ is equivalent to each of the following assertions: every set is in bijection with a mutually non-isomorphic collection of extensional and isomorphism-extensional APGs, or isomorphism-extensional APGs, or RAPGs, or RAPSGs.
A natural weakening is to assert that every set is in bijection with a mutually non-isomorphic collection of structures satisfying fewer requirements, and this idea actually gives rise to another family of equivalent principles.

\begin{theorem}
    The following four assertions are equivalent under $\ZFF$, and we will name them as the weak Frucht's theorem $\WFT$:
    \begin{enumerate}
        \item Every set is in bijection with a mutually non-isomorphic collection of graphs;
        \item collection of simple graphs;
        \item collection of APGs;
        \item collection of APSGs.
    \end{enumerate}
\end{theorem}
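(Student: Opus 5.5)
I will establish the equivalences through the cycle of implications (4)$\to$(2)$\to$(1) and (4)$\to$(3)$\to$(1), then close it with (1)$\to$(4). The forward directions are essentially free. An APSG can be turned into an APG by replacing each undirected edge $\{a,b\}$ with the two directed edges $(a,b)$ and $(b,a)$, as in the proof of $\ARs\to\AR$. This operation preserves and reflects isomorphism, because the original edge set is recovered from the symmetric relation. Forgetting the point takes an APSG to a simple graph and an APG to a graph. The only issue is that forgetting the point could make two non-isomorphic pointed structures isomorphic. To handle this, I will first make the point recognizable in a way that survives forgetting it. For an APSG, attach to the point a pendant path of length $n$, where $n$ is large and exceeds every other pendant path. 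Since the graphs may be infinite, a more robust choice is a distinguishing gadget, for example a new vertex of infinite degree, or a structure that is clearly unique, such as a $K_4$ attached at the point. That is awkward in general, so the cleaner route is to go through (1)$\to$(4) directly and only ever need (4)$\to$(1), (4)$\to$(2) and (4)$\to$(3).

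For (4)$\to$(3), use the doubling described above. For (4)$\to$(2) and (3)$\to$(1), I want the point to be definable after forgetting it. Given a mutually non-isomorphic family, I will transform each member $G$ into a new pointed structure $G^*$ whose point is the unique vertex with some invariant property: for a directed graph, the unique vertex with a loop and no other looped vertex. Concretely, I first pass through the edge-subdivision trick of the figure ``Convert RAPG to RAPSG''. That trick produces simple graphs with no loops and with only finite gadgets, and it encodes direction faithfully; it is applied uniformly, and this step does not use rigidity. I then attach at the point a fixed finite gadget that does not occur anywhere else, for instance a triangle, since the subdivided graph is bipartite-like and triangle-free. It should follow that isomorphisms of the unpointed graphs must send the unique triangle to the unique triangle, and hence send point to point.

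For (1)$\to$(4): given graphs $G_x=(V_x,E_x)$, $x\in X$, pairwise non-isomorphic, add a new point $p_x$ with an edge to every vertex, together with a loop on $p_x$ and no loops elsewhere. To do this, first remove any existing loops and record them by a new looped marker; alternatively, use a $2$-cycle structure so that $p_x$ is uniquely the vertex adjacent to everything with a special gadget. The result is an APG, since every vertex is a child of $p_x$. An isomorphism of the new APGs preserves points, so it restricts to an isomorphism $G_x\cong G_y$, and hence $x=y$. Finally apply the subdivision construction to get APSGs; the construction reflects isomorphism, so non-isomorphism is preserved.

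The main obstacle is checking that the subdivision construction of the figure ``Convert RAPG to RAPSG'' reflects isomorphism for arbitrary, possibly non-rigid and infinite APGs. Specifically, one must verify that original vertices are distinguishable from gadget vertices, for example by degree patterns (sticks of length $2$ and $3$ ending in leaves). One must also handle degenerate original vertices of degree $\le 1$ or $2$, which could be confused with stick vertices; I would fix this by attaching to every original vertex a stick of a distinct length, say $4$.
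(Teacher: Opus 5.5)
\textbf{Where the proposal fails as written.} The claim that the subdivided graph is ``bipartite-like and triangle-free'' is false. APGs may contain loops (the canonical picture of a Quine atom is an example). The construction of the figure ``Convert RAPG to RAPSG'' turns a loop $a\to a$ into the path $a$--$i_1$--$i_2$--$a$, which is a triangle, with sticks of length $2$ and $3$ hanging from $i_1,i_2$. More generally, a cycle of length $k$ in the underlying multigraph becomes a cycle of length $3k$, so odd cycles also survive. Hence in (3)$\to$(1) a triangle glued at the point need not be the unique triangle, and the step ``the unique triangle goes to the unique triangle, hence point to point'' breaks down.

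The repair is cheap and is already latent in your text:
\begin{enumerate}
\item For the directed target (1): subdivide, double, and put a loop on the point. The doubled subdivided graph is loop-free, so the point becomes the unique looped vertex.
\item For (4)$\to$(2): the triangle marker is fine. The directed graph you subdivide there is a doubled APSG, which has no loops, so its subdivision really is triangle-free.
\item Alternatively: glue a $K_4$, which never occurs in a subdivided graph, or note that the marker triangle is the only triangle with two vertices of degree $2$.
\end{enumerate}

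\textbf{Further remarks.} The loop surgery in (1)$\to$(4) is superfluous. An isomorphism of APGs preserves the point by definition, so adding a new point with an edge to every vertex already suffices; this is exactly the paper's step. Altering loops would only create an extra reflection obligation.

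Your ``main obstacle'' also needs no extra sticks of length $4$. In the subdivided graph:
\begin{itemize}
\item each $i_1,i_2$ has degree exactly $3$ and a neighbour of degree $2$;
\item each stick vertex has a neighbour of degree $1$ or $2$;
\item each original vertex is adjacent only to $i$'s.
\end{itemize}
So the original vertices are exactly those all of whose neighbours have degree $3$, and the direction of each edge is read off from the stick lengths $2$ versus $3$. This works for infinite, non-rigid, pointed or unpointed graphs, loops included. The paper takes this for granted as well.

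\textbf{Comparison with the paper.} With these repairs your proof is correct but organised differently. The paper:
\begin{itemize}
\item switches between simple and non-simple structures only by subdivision and doubling;
\item passes from graphs to APGs by adding a universal point;
\item passes from APGs to graphs with a small directed gadget: new vertices $p_1,p_2$ with $p\to p_1$, $p_1\to p_1$, $p_1\to p_2$, and $p_1\to v$ for every old $v$.
\end{itemize}
In that gadget, $p_1$ is the only vertex pointing to every vertex, since only $p_1$ points to $p_2$. Then $p$ is the only parent of $p_1$ other than $p_1$ itself. So loops in the original APG are harmless and no detour is needed. Your route instead removes loops by passing through the subdivided simple graph and marks the point there. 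It is uniform, but it needs the subdivision-reflection lemma also for the pointed-to-unpointed step, whereas the paper's gadget there is verified in one line.
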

\begin{proof}
    The way we convert RAPGs to RAPSGs can be used to convert mutually non-isomorphic graphs and APGs to mutually non-isomorphic simple graphs and APSGs; and to convert mutually non-isomorphic simple graphs and APSGs to mutually non-isomorphic graphs and APGs, we still just need to replace each undirected edge $\{a,b\}$ with two directed edges $(a,b)$ and $(b,a)$.

    To convert a graph to an APG and preserve the property being mutually non-isomorphic, we just add a new vertex as the point pointing to every vertex in the previous graph. To convert an APG $G=(V,E,p)$ to a graph and preserve the property being mutually non-isomorphic, we add two new vertices $p_1,p_2$ and new edges $p\to p_1,p_1\to p_1,p_1\to p_2,p_1\to v$ for every $v\in V$. Now, $p_1$ is the only vertex such that every vertex is pointed by it, $p$ is the only vertex pointing to $p_1$ that is not $p_1$ itself, $p_2$ is the only vertex such that every path from $p$ to it must go through $p_1$, so every isomorphism between two such graphs must induce an isomorphism between the previous APGs, which doesn't exist.
\end{proof}

Next, we prove that $\WFT$ is strictly weaker than $\FT$. Notice that the proof that $\FT$ can't be proved in $\ZFF$ in \cite{Pinsky2023-rx} also works for $\WFT$, so $\WFT$ is a new set-theoretic principle below $\AC$ and $\AF$.
\begin{theorem}
    If $\ZFF$ is consistent, it can't prove $\WFT\to\FT$.
\end{theorem}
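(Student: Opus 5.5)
The plan is to construct a permutation model of $\ZFA$ in which $\WFT$ holds but $\FT$ fails, and then move it into a model of $\ZFF$ with the transfer theorem (Theorem~\ref{thm:transfer}). In the transferred model $W$ the atoms become non-well-founded sets whose $\in$-structure is given by a relation $R$ on the atoms. The natural choice is to take the atoms to carry a structure with many automorphisms but no rigid encoding. Concretely, I would take the atoms to be $A=\mathbb{Z}\times\mathbb{Z}$ (or $\omega\times\mathbb{Z}$). Let $R$ make each row a copy of $(\mathbb{Z},<)$, with each atom's $R$-predecessors being the earlier atoms of its row. Then $R$ is extensional and ill-founded, as Theorem~\ref{thm:transfer} requires. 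The group $\mathcal{G}$ consists of the $R$-automorphisms, namely shifts within each row, possibly together with permutations of the rows. The normal filter is generated by pointwise stabilizers of finite sets of atoms.

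\textbf{Failure of $\FT$.} By the equivalences of Section~4 it suffices to refute $\WRR$ or $\AR$ for the set $A$ in the transferred model. Take any rigid graph $G$ whose vertex set contains an injective image of $A$. Its support is a finite set $E$ of atoms. Choose a row $r$ containing no atom of $E$; if the rows are permuted, choose two such rows instead. A nontrivial shift of row $r$, or a swap of the two rows, fixes $E$ pointwise, so it fixes $G$ and the injection. It then induces a nontrivial automorphism of $G$, contradicting rigidity. I also need to check that the transfer does not create new sets that break this. The image $\pi$ is surjective, so every set of $W$ comes from a set of the permutation model, and $\pi$ respects $\in$. It should therefore suffice to observe that $\pi$ commutes with the automorphisms in $\mathcal{G}$, because they preserve $R$.

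\textbf{Truth of $\WFT$.} I would give every set $X$ of the model an injective assignment of mutually non-isomorphic graphs, and I expect this to be the main obstacle. The first step is to show that every set is in bijection with a set of the form (ordinal) $\times$ (finite sets of atoms modulo some equivalence). In an orbit, elements with support $E$ correspond to cosets of the stabilizer. The second step is to attach to each finite $E\subseteq A$ and ordinal tag a graph: the canonical picture of $(\operatorname{trcl}$ of $E$, together with the pure ordinal$)$ in $W$. Non-isomorphism must be preserved even though atoms in different positions of a row give isomorphic pictures. This is why I want the graph to record only data that actually separate the elements of $X$, rather than arbitrary atoms. Such graphs need not be rigid, and this is exactly the latitude $\WFT$ gives over $\FT$. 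For example, an atom $a$ in row $r$ can be coded by the picture of the whole row with $a$ marked. Two atoms in the same row give non-isomorphic pointed pictures only if the marking is not shift-invariant, so I would mark pairs or finite configurations up to the symmetry, matching the coset structure.

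The delicate part is making the graph assignment itself symmetric, that is, supported, uniformly over $X$. My first attempt would use the orbit decomposition: fix representatives in a pure well-ordered way, and map each element $x$ with support $E$ to the graph built from $E$ together with the data distinguishing $x$ from other elements with the same support. If rows are shifted freely, the row-with-marked-configuration graphs become isomorphic in exactly the cases where the elements are related by a group element. Elements of $X$ related by a group element fixing the support of $X$ would then collapse. To fix this, I would include the support of $X$ itself, as marked atoms, in every graph. Isomorphism then has to respect those marks, and the remaining collapses can only come from group elements fixing $\operatorname{supp}(X)$. Those elements map $X$ to itself, so the collapses need separate handling, perhaps by refining the marks using additional coordinates.
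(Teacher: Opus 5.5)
\textbf{Your model refutes $\WFT$ as well as $\FT$, so the plan cannot be completed.} Take the set $A$ of atoms (or its image under $\pi$; the transfer reasoning you invoke for $\neg\FT$ carries the argument over). Let $f$ be any finitely supported injection assigning graphs to atoms, and let $E$ be a finite support of $f$. Pick a row $r$ disjoint from $E$ and an atom $a$ in row $r$. Let $\sigma$ shift row $r$ by one and fix everything else. Then $\sigma\in\operatorname{Fix}(E)$, so $f(\sigma a)=\sigma(f(a))$.

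Every $\tau\in\mathcal{G}$ fixing $a$ maps row $r$ onto itself with zero shift. Hence $\tau$ is the identity on row $r$ and commutes with $\sigma$, whether or not you allow row permutations. Therefore the map $w\mapsto\sigma(w)$ on the vertex set of $f(a)$ is supported by $E\cup\{a\}$ and lies in the model. It is an isomorphism $f(a)\cong f(\sigma a)$ with $\sigma a\neq a$. So $A$ admits no assignment of mutually non-isomorphic graphs at all.

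The symmetry you use to break rigidity breaks non-isomorphism just as well. The ``collapses'' in your last paragraph are this phenomenon. No refinement of marks or extra coordinates can remove them, because the argument applies to every finitely supported $f$.

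\textbf{What is missing is a mechanism that separates the two principles.} The paper indexes the atoms by $\omega^{<\omega}$ and takes $\mathcal{G}=\operatorname{Aut}(A,p)$. In the finite-support model, the children sets $B_a,B_b$ of distinct atoms admit no injection between them. So the stars $C_a$ are mutually non-isomorphic without being rigid, and a variant of Pinsky's Lemma 3 lifts this from atoms to arbitrary sets, replacing your orbit-decomposition plan.

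Rigidity is not destroyed in that finite-support model; there $A_1$ still embeds into a rigid graph, namely the tree itself. It is destroyed by passing to the small-kernel submodel $M$ of sets whose kernel lies in some $A_{\leq n}$. There, a level-by-level induction with commutators shows the following: for any graph $D$ with bounded kernel and any injection $j$ of $A_1$ into its vertices, swapping two subtrees $T_a,T_b$ induces a finitely supported nontrivial automorphism of $D$.

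Your $\neg\FT$ argument also skips the corresponding check, namely that the induced map $w\mapsto\sigma(w)$ on $V_G$ is itself in the model. In your setting that check is automatic because the relevant permutations commute. That same automatic commutation is exactly why $\WFT$ fails there too.
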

\begin{proof}
    The strategy is to construct a model of $\WFT\wedge\neg\FT$ in the language of set theory with (possibly class many) atoms, then the transfer theorem provides our desired model of $\ZFF+\WFT+\neg\FT$.

    Consider the following model $V$ of $\ZFCA$: the set of atoms $A$ is countable and a bijective enumeration $\{a_s\mid s\in\omega^{<\omega}\}$ is fixed. Let $\operatorname{ht}(a_s)=\operatorname{dom}(s)$, $A_n=\{a_s\mid \operatorname{dom}(s)=n\}$, $A_{\leq n}=\bigcup_{m\leq n}A_m$, $p:A\backslash\{a_{\emptyset}\}\to A$ map $a_{s}$ to $a_{s\upharpoonright(\operatorname{dom}(s)-1)}$, $B(a)=p^{-1}(\{a\})$, $a_s\preceq a_t$ if and only if $s\subseteq t$, and $T_a=\{b\mid a\preceq b\}$.

    Consider $\mathcal{G}=\operatorname{Aut}(A,p)$, the group of all automorphisms of $A$ that respect $p$. Now, the standard definition gives $\operatorname{Fix}(S)=\{\pi\in\mathcal{G}\mid\pi\upharpoonright S=\operatorname{id}_S\}$. We let $\mathcal{F}$ be the normal filter of subgroups generated by all $\operatorname{Fix}(S)$ with $S\in[A]^{<\omega}$. So what we want first is a finite-support permutation model $W=\operatorname{HS}(V,\mathcal{G},\mathcal{F})$, which satisfies $\ZFA$ and contains the set of all atoms $A$ in it.

    Now live in $W$, consider $\mathcal{I}=\{Z\subseteq A\mid \exists n<\omega(Z\subseteq A_{\leq n})\}$ which is an ideal on $A$. Let the small-kernel model $M=\{x\in W\mid \operatorname{ker}(x)\in\mathcal{I}\}$. In \cite{Yao2026-rc}, we have the standard small-kernel theorem claiming that $M$ is a model of $\ZFA$. Notice that all atoms form a proper class in $M$.

    In $W$, we define APG $C_a=(\{a\}\cup B_a,E_a,a)$ for every atom $a$, where $E_a=\{a\to b\mid b\in B_a\}$. We claim that $C_a$ is not isomorphic to any $C_b$ in $W$ if $a\neq b$; in fact, we can prove that there is no injection from $B_a$ to $B_b$. Assume to the contrary that $h:B_a\to B_b$ is an injection, pick a finite support $S$ of $h$; since $T_u(u\in B_a)$ are pairwise disjoint, for every $e\in S\cup\{b\}$ there is at most one $u\in B_a$ such that $e\in T_u$. So we can pick $u,v\in B_a$ such that $(T_u\cup T_v)\cap(S\cup\{b\})=\varnothing$. Now let $\pi$ be the automorphism that only exchanges $T_u$ and $T_v$ but fixes everything else; $S\cup B_b$ will be fixed pointwise by $\pi$, so $\pi(h)=h$ and $h(v)=h(\pi(u))=\pi(h)(\pi(u))=\pi(h(u))=h(u)$, contradicting the fact that $h$ is an injection.

    Now, we have proved that in $W$, every set of atoms in $M$ is in bijection with a mutually non-isomorphic collection of APGs in $M$, and obviously in $M$ this is true as well. By modifying the proof of Lemma 3 in \cite{Pinsky2023-rx}, we can prove that if the set of atoms in $\operatorname{trcl}(\{x\})$ is in bijection with a mutually non-isomorphic collection of APGs, then so is the set $x$. As a consequence, $M\vDash\WFT$.

    Let's work in $V$ to derive more results on $W$. We claim that, given $m<\omega$, $\pi\in\operatorname{Fix}(A_{\leq m})$, and $E\in[A]^{<\omega}$, there is a $\tau\in \operatorname{Fix}(A_{\leq m})$ such that $\tau\upharpoonright E=\pi\upharpoonright E$ and $F_\tau=\{u\in A_{m+1}\mid \tau(u)\neq u\}$ is finite. To prove this claim, notice that $R=\{a_{e\upharpoonright m+1}\mid e\in E, \operatorname{ht}(e)\geq m+1\}$ is a finite subset of $A_{m+1}$. $\pi$ fix $A_{\leq m}$, so it is a permutation on every $B_a\subseteq A_{m+1}$. For every such $B_a$ intersecting $R$, we pick a minimal permutation on $B_a$ generated by $\pi\upharpoonright (B_a\cap R)$. Gluing all these permutations together and extending them naturally gives us the desired $\tau$.

    Now we prove that in $W$, there is no injection $j$ from $A_1$ to a graph $D=(V_D,E_D)$ such that $D$ is rigid and $\operatorname{ker}(D\cup\{j\})\subseteq A_{\leq N}$ for some $N$. Assume to the contrary that these objects exist, let $S$ be a finite support of $D\cup\{j\}$. For $1\leq m\leq N$, consider the assertion $\mathrm{P}_m$ which states that every $\pi\in\operatorname{Fix}(A_{\leq m})\cap\operatorname{Fix}(S)$ fixes $V_D$ pointwise. We will prove that all $\mathrm{P}_m$ holds.
    
    \begin{itemize}
        \item $\mathrm{P}_N$ holds since any such $\pi$ must fix $\operatorname{ker}(D)$ pointwise.
        \item Now we prove $\mathrm{P}_m$ from $\mathrm{P}_{m+1}$. For $\pi\in\operatorname{Fix}(A_{\leq m})\cap\operatorname{Fix}(S)$ and $v\in V_D$, let $T$ be a finite support of $v$ and use the previous observation on $\pi$ and $S\cup T$ to get $\tau\in\operatorname{Fix}(A_{\leq m})$ such that $\tau\upharpoonright(S\cup T)=\pi\upharpoonright(S\cup T)$ and $F=\{u\in A_{m+1}\mid \tau(u)\neq u\}$ is finite. $\pi$ fixes $S$ pointwise, so $\tau$ also does. Thus, $\tau(D)=D$.

        Now consider $\rho_\tau:V_D\to V_D$ such that $\rho_\tau(w)=\tau(w)$ for every $w\in V_D$. We claim that $\rho_\tau\in W$ because $S\cup F$ is one of its finite supports. For any $\sigma\in\operatorname{Fix}(S\cup F)$, let $\delta=\sigma\tau\sigma^{-1}\tau^{-1}$ be the commutator; $\tau$ fixes $A_{\leq m}$ pointwise, so $\delta$ also does; at level $A_{m+1}$, $\tau$ only moves atoms in $F$, which will be fixed pointwise by $\sigma$, so again $\delta$ fixes $A_{m+1}$ pointwise. Finally, $\delta\in\operatorname{Fix}(A_{\leq m+1})\cap\operatorname{Fix}(S)$ because $\sigma$ and $\tau$ both fix $S$ pointwise. The induction hypothesis now gives us that $\delta$ fix $V_D$ pointwise, or $\sigma(\rho_\tau)=\rho_\tau$.

        In $W$, $\rho_\tau$ is an automorphism of $D$, so rigidity of $D$ ensures $\rho_\tau=\operatorname{id}$, and then $\tau(v)=v$; $\pi^{-1}\tau$ fixes $T$ pointwise, so $\pi(v)=v$, and the proof is finished.
    \end{itemize}

    Now $\mathrm{P}_1$ holds, i.e., everything in $\operatorname{Fix}(A_1)\cap\operatorname{Fix}(S)$ fixes $V_D$ pointwise. Let $R=\{a\in A_1\mid S\cap T_a\neq\emptyset\}$ which is a finite set; take $a,b\in A_1\backslash R$ and construct an automorphism $\eta$ that exchanges $T_a,T_b$ and fixes everything else. Again, we prove that the finite set $S\cup\{a,b\}$ supports $\rho_\eta:V_D\to V_D,w\mapsto \eta(w)$, so $\rho_\eta\in W$. Again, let $\sigma\in\operatorname{Fix}(S\cup\{a,b\})$ be arbitrary and $\delta=\sigma\eta\sigma^{-1}\eta^{-1}$ be the commutator. Again, $\delta$ fixes $A_1$ and $S$ pointwise, so $\mathrm{P}_1$ ensures that $\delta$ fixes $V_D$ pointwise. Again, $\rho_\eta\in W$, so by rigidity it is $\operatorname{id}$. Now $S$ supports $j$ but $\eta$ fixes $S$, so $\eta(j)=j$ and $j(b)=\eta(j)(\eta(a))=\eta(j(a))=\rho_\eta(j(a))=j(a)$, contradicting the fact that $j$ is an injection.

    If $M\vDash\FT$, we have $M\vDash\WRR$, which means that there is an injection $j$ from $A_1$ into the set of vertices of a graph $D$ that is rigid in $M$. Obviously $D$ is still rigid in $W$ and $j$ is still an injection in $W$, but $D,j\in M$ makes $\operatorname{ker}(D\cup\{j\})\subseteq A_{\leq N}$ for some $N<\omega$, contradicting the claim proved above.
\end{proof}


\section{Variants of Frucht's theorem II: global Frucht's theorem}

Another way to get variants of Frucht's theorem is to globalize the existence of a bijection from every set to the existence of a bijection from the whole universe, which must be stated in second-order set theory.
So we will work in $\GBF$ in this section, formed by removing the axiom of foundation from G\"odel-Bernays set theory (second-order, without any axiom of choice).

Naturally, we will have a globalized version of Frucht's theorem and a globalized version of the weak Frucht's theorem.
$\GFT$ is the assertion that there is a class function defined on the universe of all sets such that every set is assigned with an RAPG that is unique up to isomorphism.
$\GWFT$ is the assertion that there is a class function defined on the universe of all sets such that every set is assigned with an APG that is unique up to isomorphism.
Notice that all the proofs about different equivalent forms of $\FT$ and $\WFT$ still work in $\GBF$, we can switch the kind of structures assigned to sets as we want.
Surprisingly, unlike that $\WFT$ is strictly weaker than $\FT$, their globalized versions are equivalent.
\begin{theorem}
    $\GBF\vdash\GFT\leftrightarrow\GWFT$.
\end{theorem}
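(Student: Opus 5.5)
The nontrivial direction is $\GWFT\to\GFT$; the converse is trivial, since an RAPG is in particular an APG. Fix a class function $F$ witnessing $\GWFT$: every set $y$ is sent to an APG $F(y)=(V_y,E_y,p_y)$, and $F(y)\cong F(y')$ implies $y=y'$. The plan is to rigidify uniformly. To each set $x$ I attach a set-sized structure in which every vertex that \emph{is} a set $y$ carries a tagged copy of $F(y)$. The vertices of that copy are again sets, and they are linked back to themselves as global vertices. An automorphism that moves a vertex $y$ would then have to carry $F(y)$ isomorphically onto $F(y')$ for the image $y'$, which forces $y'=y$. I will first build a graph with several edge colours and then encode the colours, as explained below.

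\textbf{Construction.} Fix $x$ and let $Y_x$ be the least set containing $\operatorname{trcl}(\{x\})$ that is closed under $y\mapsto V_y$ (as a set of vertices) and under taking transitive closures. It is obtained as $\bigcup_n X_n$ with $X_0=\operatorname{trcl}(\{x\})$ and $X_{n+1}=\operatorname{trcl}\bigl(X_n\cup\bigcup_{y\in X_n}V_y\bigr)$, which is a set by replacement and recursion. The coloured graph $K_x$ has two kinds of vertices.
\begin{itemize}
  \item Global vertices: the elements of $Y_x$.
  \item Tagged vertices: the pairs $(y,u)$ with $y\in Y_x$ and $u\in V_y$, taken disjoint from $Y_x$.
\end{itemize}
The edges, in four colours, are:
\begin{itemize}
  \item colour~1: $y\to z$ whenever $z\in y$, both in $Y_x$;
  \item colour~2: $y\to(y,p_y)$;
  \item colour~3: $(y,u)\to(y,v)$ whenever $u\to v$ in $E_y$;
  \item colour~4: $(y,u)\to u$.
\end{itemize}
The point of $K_x$ is $x$.

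\textbf{Encoding and accessibility.} To get a plain APG, each coloured edge is replaced by the subdivided-edge gadget of the RAPG$\to$RAPSG conversion, using distinct stick lengths for the four colours. Any isomorphism of the encoded graphs then restricts to a colour-preserving isomorphism of the coloured graphs, and new gadget vertices can be recognised from old ones by degree patterns, exactly as in Pinsky's construction. Accessibility is immediate from the recursive definition of $Y_x$. From $x$, colour-1 edges reach all of $\operatorname{trcl}(\{x\})$. From any $y$, colour-2 and colour-3 edges reach every $(y,u)$, since $F(y)$ is accessible from $p_y$. Colour-4 edges then reach every $u\in V_y$, and membership edges reach the transitive closures.

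\textbf{Rigidity and non-isomorphism.} Let $\sigma:K_x\to K_{x'}$ be a colour-preserving isomorphism of pointed graphs, so $\sigma(x)=x'$. Global vertices are exactly those that are sources of colour-2 edges, so $\sigma$ maps global vertices to global vertices and tagged vertices to tagged vertices. Fix a global $y$ and put $y'=\sigma(y)$. The colour-3 component of $(y,p_y)$ is the tagged copy of $F(y)$, with point $(y,p_y)$. Since $\sigma$ sends $(y,p_y)$ to $(y',p_{y'})$ and preserves colour~3, it restricts to a pointed isomorphism of the tagged copies, giving $F(y)\cong F(y')$; the choice of $F$ then forces $y'=y$. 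Hence $\sigma$ is the identity on global vertices. It follows that $\sigma(y,u)=(y,u')$ for some $u'\in V_y$. Following the colour-4 edge gives $u'=\sigma(u)=u$, so $\sigma$ is the identity on tagged vertices as well. Two conclusions follow.
\begin{itemize}
  \item Taking $x=x'$, every automorphism of $K_x$ is the identity, so $K_x$ is rigid.
  \item For $x\neq x'$, the same argument at the point gives $F(x)\cong F(x')$, which is impossible. So $K_x\not\cong K_{x'}$.
\end{itemize}
Thus $x\mapsto K_x$, after encoding, is a class function assigning to every set an RAPG unique up to isomorphism, which is $\GFT$.

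\textbf{Main obstacle.} The delicate point is the first step of the rigidity argument, where $\sigma$ must respect the partition into global and tagged vertices and preserve colours. This is guaranteed by the stick-length encoding together with the source-of-colour-2 characterisation of global vertices, which I will check explicitly. It matters that every $y\in Y_x$ has a genuine colour-2 edge, even when $F(y)$ is a single vertex. The use of a \emph{class} function $F$ is essential: the construction needs $F(y)$ for all the unboundedly many sets $y$ occurring in $Y_x$. This explains why the set-sized $\WFT$ does not suffice for the corresponding set-level implication.
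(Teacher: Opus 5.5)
Your proof is correct, and it takes a different route from the paper's.

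\textbf{The paper's route.} It builds $D(x)$ as an $\omega$-stage unfolding. Every newly created vertex $v=(k-1,u,a)$, regarded as a set, receives its own fresh copy of $C(v)$ at level $k$. Edge types are told apart without colours by an auxiliary spine: $\omega\to n$, $n\to m$ for $m<n$, and $k\to$ every level-$k$ vertex. The spine pins down levels, so an automorphism moving a vertex $v$ to $v'\neq v$ yields, one level higher, an isomorphism $C(v)\cong C(v')$, which is impossible.

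\textbf{Your route.} You fold this tower back into a single set. By closing under $y\mapsto V_y$, you attach exactly one tagged copy of $F(y)$ to each $y\in Y_x$. The colour-4 back-edges $(y,u)\to u$ then pass the fixedness of the global vertex $u$ (already established by injectivity of $F$ up to isomorphism) to the tagged vertex $(y,u)$. This kills the possibly nontrivial automorphisms of the copies. In the paper the attached invariants are those of the graph's own tagged vertices. In yours the vertices of $F(y)$ are named by the sets $u\in V_y$ they actually are.

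\textbf{What each buys.} Your version gives a more economical graph, with copies shared rather than unfolded, and a clean two-step rigidity argument: globals first, then tagged vertices. The price is the coloured-edge encoding, which you only sketch. It is routine: if all sticks have length at least $2$, the stick vertices are exactly those of degree at most $2$ with a neighbour of degree at most $2$. This recovers original vertices and gadgets, and eight distinct stick lengths then recover colours and orientations. The paper's spine avoids this step and yields a plain APG directly.

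\textbf{A small simplification.} Colour~1 and the transitive closures are unnecessary. The closure of $\{x\}$ under $y\mapsto V_y$ is already accessible from $x$ through colours 2--4, and your rigidity argument never uses membership edges.
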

\begin{proof}
    Obviously $\GFT\to\GWFT$, so we will construct a class function $x\mapsto D(x)$ assigning each set with an RAPG that is unique up to isomorphism, provided with $x\mapsto C(x)$ where $C(x)$s are mutually non-isomorphic APGs.
    
    Suppose $C(x)=(V(x),E(x),p(x))$.
    We recursively construct a sequence of APGs. For $C_1(x)$, the set of vertices would be $\{\omega\}\cup\omega\cup\{(1,x,a)\mid a\in V(x)\}$, and the set of edges would be $\{\omega\to n\mid n<\omega\}\cup\{n\to m\mid m<n<\omega\}\cup\{0\to (1,x,p(x))\}\cup\{(1,x,a)\to(1,x,b)\mid(a,b)\in E(x)\}\cup\{1\to (1,x,a)\mid a\in V(x)\}$. $\omega$ will be the point. Now for $C_k(x)(k\geq 2)$, the construction will be that, for every $v\in V(C_{k-1}(x))$ of the form $(k-1,u,a)$, we add $\{(k,v,b)\mid b \in V(v)\}$ as new vertices, and add $\{v\to(k,v,p(v))\}\cup\{(k,v,b)\to(k,v,c)\mid(b,c)\in E(v)\}\cup\{k\to(k,v,b)\mid b\in V(v)\}$ as new edges. Since we are constructing step by step, $C_k(x)$ is naturally a subgraph of $C_{k+1}(x)$, so taking the union after $\omega$ steps we get the desired $D(x)$.

    Let us show that $D(x)$ and $D(y)$ are not isomorphic once $x\neq y$. If there is an isomorphism from $D(x)$ to $D(y)$, $\omega$ will be mapped to $\omega$, then every $n$ will be mapped to $n$ clearly. Now, the children of $1$ besides $0$ in the two APGs form a copy of $C(x)$ and a copy of $C(y)$, and $(1,x,p(x))$ must be mapped to $(1,y,p(y))$ as the only vertex pointed by $0$. Thus, $C(x)$ is isomorphically mapped to $C(y)$, which is impossible.

    Let us show that $D(x)$ is rigid. Similarly, any automorphism must fix $\omega$ and every $n$. If the automorphism is not the identity function, we may pick the least $n$ such that the automorphism does not fix some $(n,u,a)$; it is a child of $n$, so it must be converted to some other $(n,v,b)$. The descendants of $(n,u,a)$ pointed by $n+1$ form a copy of $C((n,u,a))$, and the descendants of $(n,v,b)$ pointed by $n+1$ form a copy of $C((n,v,b))$. Thus, the automorphism induces an isomorphism between $C((n,u,a))$ and $C((n,v,b))$, which is impossible.
\end{proof}

Now we still have that $\GFT$ is strictly weaker than $\FAFA_2$ easily.
\begin{theorem}
    $\GBF\vdash\FAFA_2\to\GFT$.
\end{theorem}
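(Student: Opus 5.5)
The plan is to take the canonical picture as the global assignment. In $\GBF$, define the class function $F$ that sends each set $x$ to the APG $F(x)=(\operatorname{trcl}(\{x\}),\ni,x)$. This class exists by predicative class comprehension, since its defining formula quantifies only over sets. Each $F(x)$ is accessible: every element of $\operatorname{trcl}(\{x\})$ is reached from $x$ by a finite $\ni$-chain. This is the usual characterization of the transitive closure as the union of iterated unions, which needs no foundation.

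Next, we verify the two requirements of $\GFT$, following the chain $\FAFA_2\to\AIE\to\AIE'\to\AR$ from Section~3, but uniformly over the whole universe.
\begin{enumerate}
\item Uniqueness up to isomorphism. By the equivalent form of $\FAFA_2$ used in Section~3, two sets with isomorphic canonical pictures are identical. So $x\neq y$ implies $F(x)\not\cong F(y)$.
\item Rigidity. $\FAFA_2$ makes $F(x)$ isomorphism-extensional. By the standard argument already cited for $\AIE'\to\AR$, an isomorphism-extensional APG is rigid. Concretely, for an automorphism $g$ and a vertex $a$, $g$ restricts to an isomorphism $F(x)[a]\cong F(x)[g(a)]$, hence $g(a)=a$.
\end{enumerate}
Thus $F$ assigns to every set an RAPG that is unique up to isomorphism, which is exactly $\GFT$. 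If one prefers the simple-graph or RAPSG form, the RAPG-to-RAPSG conversion from Section~3 can be applied uniformly along $F$. As noted before the previous theorem, those conversions work in $\GBF$.

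The main point to check is that the "equivalent form" of $\FAFA_2$ is available in $\GBF$ for every pair of sets. If $F(x)\cong F(y)$ via $h$, the plan is to form the disjoint union of the two pictures with a new point pointing to $x$ and to $y$, realize it as the canonical picture of a single set such as $\{x,y\}$, and use isomorphism-extensionality there to get $x=y$. This can be done directly: take $z=\{x,y\}$. Then $F(z)$ contains both $x$ and $y$ as vertices with descendant subgraphs $F(z)[x]=F(x)$ and $F(z)[y]=F(y)$. These descendant subgraphs are isomorphic, so isomorphism-extensionality of $F(z)$ gives $x=y$. Everything is set-sized, so no class-choice principle is needed.
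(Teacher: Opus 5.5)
Your proposal is correct and matches the paper's proof, which consists only of the instruction ``take the canonical pictures.'' You supply the details the paper leaves implicit: predicative comprehension for $x\mapsto(\operatorname{trcl}(\{x\}),\ni,x)$, rigidity from isomorphism-extensionality, and the $\{x,y\}$ argument that derives the ``isomorphic canonical pictures imply identical sets'' form of $\FAFA_2$.
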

\begin{proof}
    Take the canonical pictures.
\end{proof}

\begin{theorem}
    If $\GBF$ is consistent, it can't prove $\GFT\to\FAFA_2$.
\end{theorem}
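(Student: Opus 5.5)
We need a model of $\GBF+\GFT+\neg\FAFA_2$. The natural plan is to mimic the earlier proof that $\AC$ does not imply $\FAFA_2$: take a model of $\ZFCF$ that contains two distinct Quine atoms, and then equip it with a global well-ordering. The model should be a $\ZFCA$ model with exactly two atoms $a,b$, with $R=\{(a,a),(b,b)\}$, run through Theorem~\ref{thm:transfer}. Before transferring, we may assume $V$ carries a global well-order. For instance, start from $L$ with two atoms adjoined, or take a model of $\ZFCA$ with a definable global well-ordering; its pure part is $L$, and it has two atoms. The transfer yields a model $W$ of $\ZFF+\AC$ in which $\pi(a)$ and $\pi(b)$ are two distinct Quine atoms, so $\FAFA_2$ fails in $W$.

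To get $\GFT$, I would pass to the $\GBF$ model whose classes are the classes definable over $W$ (with parameters). I expect the main obstacle to be making the global choice definable. Since $\pi$ is an isomorphism from the pure part of $V$ onto the well-founded part $\WF^W$, which is a copy of $L$, the well-founded part carries a definable global well-ordering. Moreover, each set of $W$ is $\pi$ of some set of $V$, and every set of $V$ is definable from a pure set together with the finitely many atoms $a,b$. I would therefore aim to show that every $x\in W$ is coded by a pair consisting of an element of $\WF^W$ and a finite sequence from $\{\pi(a),\pi(b)\}$, via a definable surjection. This only requires handling two parameters, and we can include $\pi(a),\pi(b)$ as class parameters. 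Concretely, I would define the coding map by recursion on the pseudo-rank in $W$, with the Quine atoms at rank $0$, exactly as in the $\FAFA_2\to\AWF$ independence proof. That recursion produces a definable injection of the universe into $\WF^W\times\{0,1,2\}^{<\omega}$, and hence a definable global well-ordering $\prec$ of $W$.

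Given a global injection $x\mapsto w(x)\in\WF$, the class function $x\mapsto$ the canonical picture of $w(x)$ assigns to each set a RAPG, since canonical pictures of well-founded sets are extensional and isomorphism-extensional and therefore rigid by the earlier chain $\AIE\to\AIE'\to\AR$. Distinct sets get non-isomorphic pictures by Mostowski collapse inside $\WF$. This gives $\GFT$. Alternatively, it suffices to produce $\GWFT$ and then apply the theorem $\GFT\leftrightarrow\GWFT$ just proved. Finally, I would check that the definable classes over $W$ satisfy $\GBF$, which is standard class comprehension for definable classes with parameters. The result is a model of $\GBF+\GFT+\neg\FAFA_2$.
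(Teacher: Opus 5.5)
Your proposal is correct and uses the same model as the paper: the paper's proof just points to the two-Quine-atom model from the second proof of $\ZFF\not\vdash\AC\to\FAFA_2$, and you supply the details it omits, namely taking the definable classes and obtaining $\GFT$ from an injection of $W$ into $\WF^W$, definable with parameter $\pi(a)$ and built by recursion on pseudo-rank with the two Quine atoms as base cases. That injection alone already yields $\GFT$ via canonical pictures, so the detour through $V=L$ and a global well-ordering is harmless but unnecessary.
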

\begin{proof}
    Consider the model in the second proof of $\ZFF\not\vdash\AC\to\FAFA_2$.
\end{proof}

For the direction from choice principles, since we're working under second-order set theory, the natural question becomes: can the axiom of global choice $\GC$, which asserts the existence of a global choice function, prove $\GFT$? Here we must distinguish $\GC$ from the axiom of global well-ordering $\GWO$, since the latter is strictly stronger than the former without $\AF$\cite{Howard1978-je}.

\begin{theorem}
    $\GBF\vdash\GWO\to\GFT$, and if $\GBF$ is consistent, it can't prove $\GC\to\GFT$.
\end{theorem}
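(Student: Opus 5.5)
For the first half, $\GWO\to\GFT$, I will use the global well-ordering to build a global analogue of $\AWF$ and then proceed exactly as in the proof $\ZFF\vdash\AWF\to\AIE$. Let $<$ be a global well-ordering of $V$. Then every set $x$ is well-ordered by $<\restriction x$. Let $\alpha_x$ be the order type of $(x,<)$ and $e_x:x\to\alpha_x$ the unique order isomorphism; by uniqueness, $x\mapsto e_x$ is a class.

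To assign each set $x$ an object that determines $x$ up to identity, I will code the whole transitive closure. Put $t=\operatorname{trcl}(\{x\})$, with $e_t:t\to\alpha_t$, and transport $\ni$ along $e_t$. This gives a pointed graph $G(x)=(\alpha_t,\{(e_t(u),e_t(v))\mid v\in u\},e_t(x))$, which is a set in $\WF$ and is class-definably attached to $x$.

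The problem is that the map $x\mapsto G(x)$ need not be injective, since two different sets can have isomorphic transitive-closure pictures. So instead I will first build a class injection $\iota:V\to\mathrm{Ord}$. By recursion along the well-ordering I would like to use the rank of $x$ in $<$. However, $<$ is only a class well-order and may have proper initial segments, so the "index" of $x$ may not be an ordinal. The standard remedy is to use the global well-ordering to index $V$ as $V=\bigcup_\alpha V'_\alpha$ by sets: let $W_\alpha$ be the $\alpha$-th set in some cumulative hierarchy of sets. Without $\AF$ the $V_\alpha$ do not exhaust $V$, so here I would instead apply $\GWO$ in the form "$V$ is in class bijection with $\mathrm{Ord}$". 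That form is equivalent to $\GWO$ in $\GB$-style set theory with classes via the usual argument: any proper class well-order is isomorphic to $\mathrm{Ord}$ after replacing it by a well-order all of whose initial segments are sets, e.g. by ordering by (rank in some set hierarchy built from the class). This conversion is the step I expect to be the main obstacle without $\AF$. My first attempt is to define $F(x)=$ the $<$-least bijection from $x$ onto an ordinal and set $U_\alpha=\{x\mid \operatorname{trcl}(\{x\})\text{ has order type}<\alpha\text{ under }<\}$, check that the $U_\alpha$ are sets, and then order lexicographically.

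Once I have a class bijection $\iota:V\to\mathrm{Ord}$, define $D(x)$ to be the canonical picture of the ordinal $\iota(x)$, i.e.\ $(\iota(x)+1,\ni,\iota(x))$. This is an extensional, well-founded, hence rigid, APG. Distinct ordinals have non-isomorphic canonical pictures by Mostowski in $\WF$. This gives $\GFT$, using that the equivalences of the earlier sections go through in $\GBF$.

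For the independence half, I will use the transfer theorem together with a class-sized atom model. Take a $\ZFCA$-style model in which the atoms form a set $A$ and carry a relation $R$ that turns each atom into a Quine-atom-like object, namely $R=\{(a,a)\mid a\in A\}$. Then take a permutation model with the full symmetric group on $A$ and supports given by the ideal of finite sets. In this setting choice fails locally. Instead, I would arrange $\GC$ by taking the $\ZFA$ model to be $V$ with $\AC$ and $A$ a proper class of atoms, where each atom is sent to a Quine atom $\pi(a)=\{\pi(a)\}$.

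Global choice on nonempty sets survives, because every nonempty set can be well-ordered in the $\AC$ ground. Under the transfer, the resulting $\ZFF$-class model satisfies $\GC$, but there are proper-class-many mutually indistinguishable Quine atoms. Any class function $D$ assigning RAPGs must distinguish $\pi(a)$ from $\pi(b)$. However, the automorphism swapping two atoms preserves every class of the model, provided the classes are taken to be the definable or symmetric ones. That automorphism sends $D(\pi(a))$ to $D(\pi(b))$, so these two graphs are isomorphic, a contradiction.

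The delicate point in this half is to choose the classes so that $\GC$ holds while every class is invariant under some atom swap. My plan is to take as classes those with finite support, and to define the global choice function to pick from a nonempty set $x$ an element fixed by every permutation fixing $\ker(x)$. Such an element exists if each set of Quine atoms is finite. I therefore expect to need sets to contain only finitely many atoms, with proper-class-many atoms overall, which is exactly a small-kernel construction as in the proof for $\WFT$.
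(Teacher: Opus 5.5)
Your first half is the paper's argument. Once $\GWO$ supplies a class injection $\iota$ of $V$ into the ordinals, $x\mapsto(\iota(x)+1,\ni,\iota(x))$ witnesses $\GFT$, and the paper says no more than that (``ordinals are naturally RAPGs''). However, discard your attempt to extract $\iota$ from an arbitrary class well-ordering via the classes $U_\alpha$. They need not be sets: every Quine atom $q=\{q\}$ has $\operatorname{trcl}(\{q\})=\{q\}$, so it lies in $U_2$, and there may be a proper class of Quine atoms. Without $\AF$ you have no set-sized stratification to fall back on, so read $\GWO$ as giving a set-like well-ordering, as the paper implicitly does.

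The genuine gap is in the independence half: the model you describe does not satisfy $\GC$. Work with the full symmetric group on a proper class of atoms and finitely supported classes. Let $F$ be any class choice function, with finite support $S$. Pick atoms $a\neq b$ outside $S$ and let $\tau=(a\;b)$. Then $\tau$ fixes $F$ and fixes the set $x=\{\pi(a),\pi(b)\}$, so $F(x)=\tau(F(x))$, yet $\tau$ moves both elements of $x$. This is the same transposition argument you use to refute $\GFT$, and it refutes $\GC$ too.

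Your rule ``choose an element fixed by every permutation fixing $\ker(x)$'' does not help. Every element of $x$ has that property, whereas a single equivariant class $F$ needs the chosen element to be fixed by the setwise stabilizer of $x$ in $\operatorname{Fix}(S)$, and here that stabilizer permutes $x$. Also, ``every set is well-orderable in the ground'' gives only local choice. So your model satisfies $\neg\GC$, not $\GC\wedge\neg\GFT$. A direct model would need a group whose setwise stabilizers of sets fix their elements, e.g.\ order-automorphisms of a class-sized dense linear order of atoms, rather than the full symmetric group.

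The paper avoids building a model at all; this reduction is the idea you are missing. It proves $\GBF\vdash\GC\wedge\GFT\to\GWO$ as follows. Use $\GC$ to replace each $D(x)$ by a chosen isomorphic copy on its cardinality $\kappa$. Use $\GC$ again to pick a well-ordering of the set of all structures on each $\kappa$. Then order sets lexicographically by (cardinal, position of their copy). This is injective because distinct sets receive non-isomorphic graphs. Hence $\GC\to\GFT$ would yield $\GC\to\GWO$, contradicting the known separation of $\GC$ from $\GWO$ without $\AF$ that the paper cites.
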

\begin{proof}
    Actually, $\GBF\vdash\GWO\leftrightarrow(\GC\wedge\GFT)$, and it's sufficient to prove it.
    
    Now, $\GWO\to\GC$ is well-known, and $\GWO\to\GFT$ follows from the fact that ordinals are naturally RAPGs. For $\GC\wedge\GFT\to\GWO$, suppose that $\GFT$ and $\GC$ hold together. A graph in the statement of $\GFT$ can be encoded as a graph on the least possible cardinal(use $\GC$ to pick one; notice that we already have $\AC$ so it is possible), and such a graph can be viewed as $<$ another if it has smaller cardinality or the cardinality is the same but it is earlier on this cardinal (again use $\GC$ to pick a local well-ordering of all structures on each cardinal), which gives us a global well-ordering.
\end{proof}

\section{More principles below $\AC$ and $\AF$}

A natural way to weaken $\FAFA_2$, which has an equivalent form claiming that every set can be injectively represented by an extensional and isomorphism-extensional APG, is to claim that every set can be represented by an extensional and isomorphism-extensional APG. We will denote this assertion by $\WFAFA_2$.
Here, an APG $G$ represents a set $X$ if $G$ represents the canonical picture of $X$, and an APG $G$ represents another APG $G'$ if there exists a decoration $d$ from $G$ to $G'$, where a map $d$ between APGs is a decoration if and only if $d$ maps the point to the point and $C_{G'}(d(u))=\{d(v)\mid u\to v\}$ for every $u\in V_G$ (which is a slight abuse of the terminologies ``represent'' and ``decoration'').

$\FAFA_2$ is obviously equivalent to the claim that every set can be injectively represented by an isomorphism-extensional APG, so we naturally conjecture that the following weakening of $\WFAFA_2$, namely $\WFAFA_2'$, is equivalent to $\WFAFA_2$: every set can be represented by an isomorphism-extensional APG.
It is trivial that $\WFAFA_2'$ implies $\AIE'$, so $\WFAFA_2'$ implies $\FT$.

A more graph-theoretic way to deal with $\WFAFA_2$ is to consider the following stronger assertion, which will be denoted by $\CWFAFA_2$ (``$\mathsf{C}$'' stands for ``combinatorial''): every extensional APG can be represented by an extensional and isomorphism-extensional APG. We also have $\CWFAFA_2'$ claiming that every extensional APG can be represented by an isomorphism-extensional APG. At least, we can prove their equivalence.

\begin{theorem}
    $\ZFF\vdash\CWFAFA_2\leftrightarrow\CWFAFA_2'$.
\end{theorem}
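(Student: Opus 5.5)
The direction $\CWFAFA_2\to\CWFAFA_2'$ is trivial, since an extensional and isomorphism-extensional APG is in particular isomorphism-extensional. For the converse, let $G'$ be an extensional APG and let $G$ be an isomorphism-extensional APG with a decoration $d:G\to G'$, as provided by $\CWFAFA_2'$. I will modify $G$ by the extensionalization construction from the proof of $\AIE'\to\AIE$, obtaining an APG $G^\ast$. I then check three things: $G^\ast$ is extensional, $G^\ast$ is isomorphism-extensional, and $G^\ast$ still represents $G'$.

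The first two properties are exactly what the proof of $\AIE'\to\AIE$ establishes. However, the decoration must be extended to the new vertices $p_a$. Since $p_a\to p_a$ and $p_a\to a$, we would need $C_{G'}(d(p_a))=\{d(p_a),d(a)\}$. Such a vertex need not exist in $G'$, and even the requirement $C_{G'}(d(a))=\{d(v)\mid a\to v\}$ would be broken, because $a$ gains the new child $p_a$. So I will not add new vertices. Instead, I will first show that $G$ itself can be taken to be extensional, or nearly so.

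The approach is to quotient. Call $u\sim v$ in $G$ if $C_G(u)=C_G(v)$. I claim that in an isomorphism-extensional APG the only failures of extensionality are mild. Suppose $u\neq v$ and $C_G(u)=C_G(v)$. Then $G[u]$ and $G[v]$ have the same vertex set apart from possibly $u$ and $v$ themselves. The map that swaps $u$ and $v$ and fixes all other vertices is then a candidate isomorphism $G[u]\to G[v]$.

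I would analyze when this swap is a graph isomorphism. This is the main obstacle: if $u$ is a descendant of itself but $v$ is not, or if some vertex points to $u$ but not to $v$, the swap fails. I would handle this with a case split.

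\begin{itemize}
\item \emph{Case where $u$ and $v$ are not reachable from their common children.} Then $G[u]$ and $G[v]$ are isomorphic via the swap, contradicting isomorphism-extensionality.
\item \emph{Case where $u$ or $v$ is reachable from a common child.} Then the two descendant subgraphs share vertices, and I try the swap on the whole of $G[u]\cup G[v]$.
\end{itemize}

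If the swap fails in the second case, I would fall back to a different idea. Form the quotient $G/{\sim}$, pointed at the class of the point, with edges $[u]\to[w]$ whenever $u\to w$. The decoration factors through this quotient, because $\sim$-equivalent vertices have the same children and hence $d(u)=d(v)$ by extensionality of $G'$. So $G/{\sim}$ again represents $G'$. Its extensionality and isomorphism-extensionality must then be verified. I expect this verification, transferring isomorphisms $G/{\sim}[x]\cong G/{\sim}[y]$ back to $G$, to be the delicate step, and possibly to require iterating the quotient transfinitely.

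If both routes fail, I would consider the largest bisimulation-like relation $u\approx v\iff d(u)=d(v)$, quotient by it, and argue via extensionality of $G'$ that the quotient embeds into $G'$. The quotient is then extensional, and its isomorphism-extensionality would be inherited from that of $G$.
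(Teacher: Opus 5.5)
The direction $\CWFAFA_2\to\CWFAFA_2'$ is fine, but the converse has a genuine gap. None of your three routes produces an extensional and isomorphism-extensional representative, and the second and third provably cannot, because non-extensionality in an isomorphism-extensional APG is not mild once cycles are present. Let $G$ have point $r$ and edges $r\to a$, $r\to s$, $a\to x$ and $b\to x$ for each $x\in\{a,b,c\}$, $c\to a$, $s\to s$, $s\to t$, $t\to s$. Descendant subgraphs have $6$ vertices (for $r$), $3$ (for $a,b,c$) or $2$ (for $s,t$). So an isomorphism between two of them would be a nontrivial automorphism of the graph on $\{a,b,c\}$ or on $\{s,t\}$. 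Both graphs are rigid: out-degrees fix $c$, then its only child $a$, then $b$, and likewise fix $s$ and $t$. Hence $G$ is isomorphism-extensional, yet $C_G(a)=C_G(b)$. This is your second case: the swap fails because $c\to a$ holds while $c\to b$ does not, and no contradiction is available. Merging $a,b$ into a vertex $e$ gives $Q$ with edges $r\to e$, $r\to s$, $e\to e$, $e\to c$, $c\to e$, $s\to s$, $s\to t$, $t\to s$. Then $Q$ is already extensional and the quotient map is a decoration $G\to Q$, but $Q[e]\cong Q[s]$ via $e\mapsto s$, $c\mapsto t$. So if $\CWFAFA_2'$ hands you this $G$ for $G'=Q$, your iterated quotient stops at once at a graph that is not isomorphism-extensional; the property is not inherited by quotients. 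The third route fails for a structural reason. Since $G'$ is accessible, every decoration $d:G\to G'$ is surjective (lift paths from the point), so the quotient of $G$ by $d(u)=d(v)$ is isomorphic to $G'$ itself. That route would therefore show every extensional APG is isomorphism-extensional, which $Q$ refutes.

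The missing idea is choosing which graph you apply $\CWFAFA_2'$ to. Your first case is exactly the right argument for acyclic graphs. If $K$ has no nonempty closed walk and $C_K(a)=C_K(b)$, then $K[a]\setminus\{a\}=K[b]\setminus\{b\}$, and sending $a$ to $b$ while fixing the rest is an isomorphism $K[a]\cong K[b]$, so $a=b$. Acyclicity can be forced, because a decoration sends each edge $u\to v$ to an edge $d(u)\to d(v)$; hence any APG decorating an acyclic one is acyclic. Accordingly, the paper does the following.
\begin{enumerate}
\item Split $G'$ into $\WFP(G')$ and $\IFP(G')$.
\item Replace $\IFP(G')$ by its unfolding tree $T$ of finite paths from the point. $T$ is extensional because every node has a child and distinct nodes have disjoint sets of children. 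Leaves would spoil this, which is why the well-founded part is removed first.
\item Apply $\CWFAFA_2'$ to $T$. This gives an isomorphism-extensional $K$, which is acyclic because it decorates $T$, and therefore extensional by the argument above.
\item Glue back $\WFP(G')$, which is extensional and isomorphism-extensional by Mostowski collapse. Add an edge $v\to w$ for $v\in K$, $w\in\WFP(G')$ whenever the image of $v$ in $G'$ points to $w$.
\end{enumerate}
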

\begin{proof}
    We will construct an extensional and isomorphism-extensional APG $H$ to represent an arbitrary extensional APG $G$ as $\CWFAFA_2$ wishes.

    We first separate the well-founded part and the ill-founded part of $G$. Let $\WFP(G)=\{a\in G\mid G[a]$ be a well-founded APG$\}$, and $\IFP(G)=G\backslash \WFP(G)$. Abusing the notations, we will denote the subgraphs induced by $\WFP(G)$ and $\IFP(G)$ by the same symbols. Clearly, there is no edge from a vertex in $\WFP(G)$ to a vertex in $\IFP(G)$, and $C_{\IFP(G)}(v)\neq\emptyset$ for every vertex $v\in\IFP(G)$. 

    If $\IFP(G)=\emptyset$, $G$ is a well-founded extensional APG, so by Mostowski's collapsing lemma we have that $G$ is already an isomorphism-extensional APG. 
    Now suppose $\IFP(G)\neq\emptyset$, at least we have that $\WFP(G)$ is a well-founded extensional APG, so we just need to deal with $\IFP(G)$.
    
    $\IFP(G)$ is naturally an APG having $p_{G}$ as point, and $C_{\IFP(G)}(v)\neq\emptyset$ for every vertex $v\in\IFP(G)$.
    Let's consider the unfolding tree $T$ of $\IFP(G)$: a vertex in $T$ is a (finite) path starting from the point, and there is an edge from one vertex (path) to the other if the later path is obtained by adding one edge at the end of the former path. $T$ is naturally an APG having the 0-path $p_{G}$ as point, and $C_{T}(v)\neq\emptyset$ for every vertex $v\in T$. Moreover, $T$ is automatically extensional, and there is a natural decoration $\pi$ from $T$ to $\IFP(G)$: one that maps a path to the ending vertex of it. 

    Now we use $\CWFAFA_2'$ to get an isomorphism-extensional APG $K$ together with a decoration $\pi'$ from $K$ to $T$, and $K$ is automatically extensional as well. In fact, since there is no loop in $T$, there must be no loop in $K$ (followed from the definition of representation). Thus, isomorphism-extensionality of $K$ implies extensionality of $K$: if $C_K(a)=C_K(b)$, then a map which maps $a$ to $b$ and fixes the other vertices in $K[a]$ is an isomorphism from $K[a]$ to $K[b]$.

    Finally, we glue $K$ and $\WFP(G)$ together correctly to get an extensional and isomorphism-extensional $H$ representing $G$. In fact, we just take the (disjoint) union of the two sets of vertices, keep the edges from both graphs, and add $v\to w$ if $v\in K,w\in\WFP(G)$ and $\pi(\pi'(v))\to w$ in $G$. The verification is direct.
\end{proof}

We know that $\WFAFA_2$ is a weakening of $\FAFA_2$, but it is hard to prove $\CWFAFA_2$ from $\FAFA_2$. However, there is a powerful theorem that can help us with it, and we can then have that $\CWFAFA_2$ can be proved from $\AWF$ as well as a consequence.

\begin{theorem}
    $\GBF\vdash\GWFT\to\CWFAFA_2$.
\end{theorem}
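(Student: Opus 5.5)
The plan is to combine the two results just proved. By the equivalence $\GBF\vdash\GFT\leftrightarrow\GWFT$, the hypothesis gives a class function $x\mapsto D(x)$ assigning to every set an RAPG, with $D(x)\not\cong D(y)$ for $x\neq y$. By $\ZFF\vdash\CWFAFA_2\leftrightarrow\CWFAFA_2'$, whose proof goes through in $\GBF$, it suffices to show: every extensional APG $G$ is represented by some isomorphism-extensional APG.

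I would reuse the first reductions from the proof of $\CWFAFA_2\leftrightarrow\CWFAFA_2'$. The well-founded part $\WFP(G)$ is already handled by Mostowski's collapsing lemma. For $\IFP(G)$ we may pass to its unfolding tree $T$, with the end-vertex decoration $\pi$. This $T$ has no loops, every vertex has a child, and distinct vertices are distinct finite paths $\langle p_G,a_1,\dots,a_n\rangle$ of $G$. So the task becomes: build an isomorphism-extensional APG $K$ with a decoration $\pi'\colon K\to T$, and then glue $K$ with $\WFP(G)$ exactly as in that proof.

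The idea for $K$ is to \emph{label} each vertex of $T$ by the rigid graph $D(s)$, where $s$ is the path that vertex names. The label must be realised only through edges that a decoration can tolerate. A decoration forces $C_T(\pi'(u))=\{\pi'(w)\mid u\to w\}$, so every vertex of $K$ must point only to copies of children of its image, and it must hit all of them. Because every vertex of $T$ has at least one child and $T$ is loop-free, there is enough room.

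For each vertex $s$ of $T$, I would put vertices $(s,t)$ for $t\in V(D(s))\cup\{\ast\}$, all decorated to $s$. The main vertex $(s,\ast)$ points to $(c,\ast)$ for each child $c$ of $s$. The auxiliary vertex $(s,t)$ points to the main children $(c,\ast)$, and additionally to copies $(c,t')$-style vertices for a fixed chosen child $c$ whenever $t\to t'$ in $D(s)$. This threads the edge relation of $D(s)$ one level down along extra parallel copies of the subtree. I would iterate this, as in the recursive construction $C_k(x)$ of the $\GFT\leftrightarrow\GWFT$ proof: label nodes level by level with $n$-indexed markers so that levels are recognisable. The result should have the following property: any isomorphism $K[u]\cong K[u']$ must preserve levels, restrict to an isomorphism between the coded copies of $D(s)$ and $D(s')$, and hence force $s=s'$. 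Rigidity of each $D(s)$ should then give $u=u'$. Finally, one checks that $\pi'$ is a decoration, using that every added edge lands on a copy of a child.

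The hard part will be the coding step. The coded copy of $D(s)$ lives inside descendant subgraphs. Recovering it canonically from $K[u]$ — distinguishing coding edges from genuine tree edges, and ensuring the decoding is invariant under isomorphisms of descendant subgraphs — needs a gadget that cannot be confused with the rest. I would first try the ``numbered spine'' trick from the $\GFT\leftrightarrow\GWFT$ proof: attach to each level a copy of $\omega$ realised by a descending chain of copies of children, which is always available because $T$ has no terminal vertices. Should this fail, I would fall back on coding $s$ itself, rather than $D(s)$, via the canonical-picture-like rigid structure supplied by $\GFT$ for the pair $(G,s)$.
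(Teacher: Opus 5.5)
Your reduction is fine: it suffices to build an isomorphism-extensional $K$ decorating onto the unfolding tree $T$ and then glue it with $\WFP(G)$. The construction of $K$ itself, however, has a genuine gap.

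\textbf{The main vertices carry no label.} As described, $(s,\ast)$ points only to main vertices, so $K[(s,\ast)]$ is a copy of $T[s]$ with nothing else in it. For $G$ a single Quine loop, $T$ is a path, so all main vertices have isomorphic descendant subgraphs. Nor can $(s,\ast)$ point to the $(s,t)$, since those are decorated to $s$ itself. A vertex's label must sit below it, on vertices decorated to children and grandchildren of its image.

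\textbf{The gadget vertices must be isomorphism-extensional too.} This requirement covers every vertex of $K$, including each vertex you add to code $D(s)$. Since $\pi'$ raises path length in $T$ by exactly one along every edge, a gadget vertex sees only what lies strictly below it. Rigidity of $D(s)$ does not help here. For example, the APG with point $p$ and edges $p\to a$, $p\to b$, $b\to c$, $a\to e$, $c\to e$ is rigid, yet its descendant subgraphs at $a$ and $c$ are isomorphic. The back-edges to the point that turn rigidity into isomorphism-extensionality in the proof of $\AR\to\AIE'$ are forbidden by the decoration. Threading the edges of $D(s)$ across levels makes things worse: a vertex then sees only an unfolding of part of $D(s)$, which keeps just its bisimulation type (a $1$-cycle and a $2$-cycle unfold alike). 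So every gadget vertex needs its own label, realised by further gadgets, and your plan does not close this regress.

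\textbf{Two further steps fail.} The ``fixed chosen child'' uses choice, which you do not have. The $\omega$-spine cannot live in $K$. All paths in $K$ between two given vertices have the same length, which rules out $n\to m$ for all $m<n$. Also, a vertex pointing to every level-$n$ vertex would have to decorate to a common parent of nodes of $T$ with different parents.

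\textbf{The missing idea} is the paper's: label each vertex $x$ of $K$ by a code for $x$ itself, not for a node of $T$, and realise the code within two levels rather than along paths. From $\GWFT$ take $\mathcal{G}_x$ and add one vertex adjacent to all of it, giving a connected simple graph $\mathcal{C}_x=(Q_x,R_x)$. Build $K$ in $\omega$ stages. Each $x$ created at stage $n$ receives at stage $n+1$, for every $c\in C_G(\pi(x))$ at once (so no choice is used):
\begin{itemize}
\item a private child $(1,x,c)$, which gives extensionality;
\item children $(2,x,c,q)$ for $q\in Q_x$;
\item grandchildren $(3,x,c,d,r)$ for $d\in C_G(c)$ and $r\in R_x$, each pointed to by exactly the two $(2,x,c,q)$ with $q\in r$.
\end{itemize}
These grandchildren are the only vertices of in-degree $2$. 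Hence the children of $x$ that share a child with another child of $x$, joined when they share one, form a disjoint union of copies of $\mathcal{C}_x$. This is read off invariantly from $K[x]$, so $K[x]\cong K[y]$ gives $\mathcal{C}_x\cong\mathcal{C}_y$ and hence $x=y$. Every gadget vertex is itself a vertex of $K$ and is labelled at the next stage, which dissolves the regress. No rigidity is needed, so the detour through $\GFT$ is unnecessary.
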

\begin{proof}
    Given an extensional APG $G$, we may assume without loss of generality that $C_{G}(v)\neq\emptyset$ for every vertex $v\in G$ (followed from the proof of the previous theorem).

    By $\GWFT$, every set is assigned with a simple graph $\mathcal{G}_x=(V_x,E_x)$ that is unique up to isomorphism. Adding to $\mathcal{G}_x$ a new vertex $\alpha_x$ which is the least ordinal that doesn't belong to $V_x$ and a new edge $\{\alpha_x,a\}$ for every $a\in V_x$, we get $\mathcal{C}_x=(Q_x,R_x)$ which is a connected simple graph with at least two vertices that is unique up to isomorphism as well.

    Now we recursively construct an isomorphism-extensional APG $K$ and a decoration $\pi$ from $K$ to $G$. $K_0=\{(0,p_{G})\}$ having no edge and $(0,p_{G})$ as point, and $\pi((0,p_{G}))=p_{G}$. Then, $K_{n+1}=K_n\cup \{(1,x,c),(2,x,c,q),(3,x,c,d,r)\mid x\in K_n,c\in C_{G}(\pi(x)),q\in Q_x, d\in C_{G}(c), r\in R_x\}$ added with edges $x\to(1,x,c),x\to(2,x,c,q)$, $(2,x,c,q)\to(3,x,c,d,r)$ (if $q\in r$), and $\pi((1,x,c))=c,\pi((2,x,c,q))=c,\pi((3,x,c,d,r))=d$. 
    After $\omega$-steps, every vertex and edge in the graph $K=\bigcup_{n\in\omega}K_n$ is added at some unique step $n$, and we get the decoration $\pi$. By induction on $n$, every vertex in $K_n$ is accessible from $(0,p_{G})$, so $K$ is an APG. Every $x\in K$ has a child of the form $(1,x,c)$, and $(1,x,c)\neq(1,y,c')$ just follows from $x\neq y$, so $K$ is extensional.

    Now we prove that $K$ is isomorphism-extensional. For every $x\in K$, notice that if $u\neq v \in C_K(x)$ satisfy that $C_K(u)\cap C_K(v)\neq\emptyset$, then there exists some $c\in C_{G}(\pi(x))$ such that $u,v$ are both of the form $(2,x,c,q)$. This is because only these vertices will have common children of the form $(3,x,c,d,r)$ in our construction. Meanwhile, for every $(2,x,c,q)$, there exists some other $(2,x,c,q')$ which shares a common child with $(2,x,c,q)$ due to the connectedness of $\mathcal{C}_x$. So $U_{x}=\{(2,x,c,q)\mid c\in C_{G}(\pi(x)),q\in Q_x\}$ is definable from $x$, and then $R(U_{x})=\{N_{U_{x}}(w)\mid w\in K[x],|N_{U_{x}}(w)|=2\}$ (where $N_{U_{x}}(w)=\{u\in U_{x}\mid u\to w\}$) is definable from $x$. Notice that every connected component of $(U_{x},R(U_{x}))$ is of the form $(U_{x,c}=\{(2,x,c,q)\mid q\in Q_x\},R(U_{x,c}))$, and then is isomorphic to $(Q_x,R_x)$: the bijection is $(2,x,c,q)\mapsto q$, and that it is an isomorphism follows immediately from our construction. Thus, if $K[x]\cong K[y]$, then the isomorphism between them will induce an isomorphism between $\mathcal{C}_x$ and $\mathcal{C}_y$, so we have $x=y$.

\end{proof}

A similar argument shows that $\ZFF\vdash\DC\wedge\WFT\to\CWFAFA_2$, since in the proof above we only use $\GWFT$ once to get $\mathcal{G}_x$ and then $\mathcal{C}_x$ uniformly in the $\omega$-step construction. So $\WFT$ provides us with the existence of all such $\mathcal{G}_x$s, and $\DC$ provides us with the ability to do an $\omega$-step dependent construction.

Now, since $\FAFA_2$ gives rise to a definable class function for $\GWFT$ (just choose the canonical pictures), we have $\ZFF\vdash\FAFA_2\to\CWFAFA_2$. As a consequence, $\ZFF\vdash\AWF\to\CWFAFA_2$ as well: every extensional APG is isomorphic to an extensional APG in $\WF$, which will be represented by an extensional and isomorphism-extensional APG in $\WF$ because $\WF\vDash\CWFAFA_2$ (followed from $\WF\vDash\FAFA_2$). Then we know that $\CWFAFA_2$ can't prove $\FAFA_2$ or $\AWF$ because $\FAFA_2$ and $\AWF$ are independent.


\section{Open questions}

At the end of this paper, we leave some open questions:
\begin{itemize}
    \item We have proved several unprovability results in $\ZFF$. What will happen if we replace $\ZFF$ with $\ZFFC$?
    \item Can $\WFAFA_2$ prove $\CWFAFA_2$? Can $\WFAFA_2'$ prove $\WFAFA_2$ (we highly believe that the answer is yes)? And can $\FT$ prove $\WFAFA_2'$?
    \item Can we find another ordinary mathematical theorem that is equivalent with some set-theoretic principle below $\AC$ and $\AF$?
\end{itemize}

\printbibliography

\end{document}